\newcommand{\pk}{\Phi^+(K)}
\renewcommand{\tt}{\mathfrak{t}}
\renewcommand{\gg}{\mathfrak{g}}
\renewcommand{\ss}{\mathfrak{s}}
\newcommand{\cc}{\mathbb{C}}
\newcommand{\kk}{\mathfrak{k}}
\newcommand{\dd}{\overline{\partial}}
\newcommand{\ttr}{\mathfrak{t}^*_{reg}}
\newcommand{\tpim}{T_{p_i}M}
\newcommand{\tpif}{T_{p_i}F}
\newcommand{\npif}{(NF)_{p_i}}
\newcommand{\cit}{c_{1,T}}
\newcommand{\tdt}{Td_T}
\newcommand{\pgk}{\Phi^+(G / K)}
\newcommand{\bi}{\beta_i}
\newcommand{\pg}{\Phi^+(G)}
\renewcommand{\ll}{L_\lambda}
\newcommand{\bb}{\mathcal{B}}
\newcommand{\qm}{Q(M, \omega, \nabla, L; J)}
\newcommand{\wg}{\mathcal{W}_G}
\DeclareMathOperator{\Imm}{im}
\DeclareMathOperator{\coker}{coker}
\newtheorem{thm}{Theorem}[section]
\newtheorem{lem}[thm]{Lemma}
\newtheorem{cor}[thm]{Corollary}
\theoremstyle{definition}
\newtheorem{defn}[thm]{Definition}
\newtheorem{example}[thm]{Example}
\theoremstyle{remark}
\newtheorem*{remark}{Remark}
\begin{document}
\title{\boldmath Character and Multiplicity Formulas for Compact Hamiltonian $G$-spaces}
\author{Elisheva Adina Gamse\thanks{Partially supported by NSF grant DMS 12--11819} \\ Department of Mathematics, Northeastern University\\ 360 Huntingdon Avenue, Boston MA 02115\\\texttt{gamse.e@husky.neu.edu}}

\maketitle

\begin{abstract}
Let $K \subset G$ be compact connected Lie groups with common maximal torus $T$. Let $(M, \omega)$ be a prequantisable compact connected symplectic manifold with a Hamiltonian $G$-action. Geometric quantisation gives a virtual representation of $G$; we give a formula for the character $\chi$ of this virtual representation as a quotient of virtual characters of $K$. When $M$ is a generic coadjoint orbit our formula agrees with the Gross-Kostant-Ramond-Sternberg formula. We then derive a generalisation of the Guillemin-Prato multiplicity formula which, for $\lambda$ a dominant integral weight of $K$, gives the multiplicity in $\chi$ of the irreducible representation of $K$ of highest weight $\lambda$.
\end{abstract}

\tableofcontents

\section{Introduction}

Let $(M,\omega)$ be a compact connected symplectic manifold, and let $G$ be a compact connected Lie group acting on $M$ in a Hamiltonian fashion with moment map $\mu:M \to \gg^*$. Assume that the equivariant cohomology class $[\omega + \mu]$ is integral. Let $L\to M$ be a complex Hermitian line bundle and let $\nabla$ be a Hermitian connection on $L$ whose equivariant curvature form is the equivariant symplectic form $\omega + \mu$. Suppose that the $G$-action on $M$ lifts to a $G$-action on $L$ which preserves $\nabla$. Let $J$ be a $G$-equivariant almost complex structure on $M$. This almost complex structure gives us a totally complex distribution $\Delta$ in the complexified tangent bundle to $M$, such that $TM\otimes \cc= \Delta \oplus \overline{\Delta}$. This gives us a splitting $\Lambda^k(TM\otimes \cc) = \sum_{i+j=k}\Lambda^i(\overline{\Delta})\otimes\Lambda^j(\Delta)$, and thus a bigrading on the space of $L$-valued differential forms. We will take $J$ to be compatible with $\omega$; that is, $\omega$ is a $(1,1)$-form and $\omega(v,Jv)>0$ for all $p \in M$ and $v \in T_pM$. 

Let $D:\Omega^k(M;L)\to \Omega^{k+1}(M;L)$ be the operator $D(s\otimes \alpha)=\nabla s \otimes \alpha + s \otimes d\alpha$, where $s \in \Gamma(L)$ and $\alpha \in \Omega^k(M)$. Define $\overline{\partial}$ to be the $(0,k+1)$ component of $D$. Fix a Hermitian metric on $M$. Together with the Hermitian inner product on $L$, this gives a Hermitian inner product on $L \otimes \Lambda^{0,k}TM$. Define operators $\overline{\partial}^t:\Omega^{0,k}(M;L)\to \Omega^{0,k-1}(M;L)$ which are $\mathcal{L}^2$-adjoint to $\dd$. Then the operator $\dd + \dd^t:\Omega^{0,even}(M;L)\to \Omega^{0,odd}(M;L)$ is elliptic; the \emph{quantisation} $\qm$ is defined as the virtual $G$-representation on $\ker(\dd+\dd^t)\ominus \coker(\dd+\dd^t)$; that is, the equivariant index of the $\dd+\dd^t$ operator on $L$. (See chapter 6 of Ginzburg, Guillemin and Karshon's book \cite{ggk}.)

In their paper \cite{gp}, Guillemin and Prato prove a formula for the multiplicity with which each irreducible character of $G$ appears in the character of this representation $Q(M, \omega, L, \nabla; J)$. In the special case of a torus $T$ acting on a coadjoint orbit $G/T$ by left multiplication, their formula becomes the Kostant multiplicity formula that Kostant obtained in \cite{kostant}.

Let $G$ be semisimple, let $K \subset G$ be a Lie subgroup of equal rank, choose a common maximal torus $T\subset K \subset G$, let $\lambda$ be a dominant integral weight for $G$, and let $M$ be the coadjoint orbit $G\cdot \lambda$. The choice of positive roots for $G$ determines a complex structure on $G\cdot \lambda$; take $J$ to be the corresponding almost complex structure on $G\cdot \lambda$. Let $L_\lambda=G\times_T \cc_{(\lambda)}$ (where $T$ acts on the line $\cc_{(\lambda)}$ with weight $\lambda$). Then the quantisation $Q(M, \omega, \ll, \nabla; J)$ is a $G$-representation on the space of holomorphic sections of $\ll$ (see \cite{ggk} for details). The Borel-Weil theorem tells us that the quantisation of $(M, \omega, \ll, \nabla, J)$ is the irreducible representation of $G$ of highest weight $\lambda$, and that all irreducible representations arise in this way, as described by Bott in \cite{bott}. In this case, Gross, Kostant, Ramond and Sternberg provided in \cite{gkrs} a formula for the character of this $G$-representation as a quotient of the alternating sum of a multiplet of $K$-characters. Their formula has its origins in String Theory and is the motivation for our work which provides a generalisation. In the special case where $K=T$, their formula becomes the Weyl character formula. 

In this paper, we extend the result of Gross, Kostant, Ramond and Sternberg by replacing the coadjoint orbit $G\cdot \lambda$ with any compact connected symplectic Hamiltonian $G$-manifold $M$, and relate the resulting character formula to the Guillemin-Prato multiplicity formula. In section two we obtain, for arbitrary compact connected symplectic manifolds $M$ with Hamiltonian $G$-actions, a formula for the character of $\qm$ as a quotient of $K$-characters. In section three, we derive from our formula a generalisation of the Guillemin-Prato multiplicity formula. 

\paragraph{Acknowledgements} I would like to thank Jonathan Weitsman for suggesting the problem and for his advice and encouragement throughout. I am also grateful to the anonymous reviewer for making valuable corrections and suggestions, and to Nate Bade, Barbara Bolognese, Victor Guillemin, and Ryan Mickler for helpful discussions.

\section{Character Formula}

Let $G$ and $K$ be compact connected Lie groups of equal rank with $K \subset G$, and choose a common maximal torus $T\subset K \subset G$. Write $\tt$, $\kk$ and $\gg$ for the Lie algebras of $T$, $K$, and $G$ respectively. Let $\mathcal{N}_G(T)$ denote the normaliser in $G$ of $T$ and let $W(G) = \mathcal{N}_G(T)/T$ be the Weyl group of $G$. Choose a set $\pg$ of positive roots of $G$, and let $\wg \subset \tt^*$ be the positive Weyl chamber for $G$. Let $\Lambda \subset \tt^*$ denote the weight lattice. For $\phi \in \pg$, let $H_\phi$ denote the hyperplane orthogonal to $\phi$ in $\tt^*$, and let $w_\phi \in W(G)$ be the reflection in this hyperplane. Let $(M,\omega)$ be a compact connected symplectic manifold, and let $G$ act on $(M, \omega)$ in a Hamiltonian manner. Then $T$ acts on $(M, \omega)$ in a Hamiltonian fashion with $T$-equivariant moment map $\mu: M \rightarrow \tt^*$; we will assume that the fixed points of this torus action are isolated. Suppose the equivariant cohomology class $[\omega + \mu]$ is integral, choose a prequantisation line bundle $(L,\nabla)$, and let $J$ be a $G$-equivariant almost complex structure on $M$ that is compatible with $\omega$. Let $\qm = ind (\dd+\dd^t)$ be the quantisation of $(M, \omega, L, \nabla; J)$, and let $\chi$ denote its character. In this section we will give an expression for the character $\chi$, as a sum of quotients of (virtual) $K$-characters. We begin by setting up the equivariant cohomology we need, and by recalling the equivariant index theorem and the localisation theorem which will be our main tools.

\subsection{Review of Equivariant Cohomology}

Let $G$ be a compact Lie group acting on a manifold $M$. Let $EG$ be a contractible space on which $G$ acts freely, so that $M\times EG \simeq M$ and the diagonal action of $G$ on $M \times EG$ is free. Form the homotopy quotient $M_G := (M\times EG)/G$. 

\begin{defn} The \emph{equivariant cohomology ring} $H^*_G(M)$ is the ordinary cohomology ring $H^*(M_G)$. \end{defn}

Let $H\subset G$ be a subgroup. Then $H$ also acts freely on $EG$, so we can take $EH=EG$ and thus $M_G = M_H/G$. If $p:M_H\to M_G$ denotes the projection, we can pull back classes in $H_G^*(M)$ along $p$ to $H^*_H(M)$.

An alternative approach to defining equivariant cohomology is known as the Cartan model. Define an \emph{equivariant differential form} to be a $G$-equivariant polynomial on $\gg$ taking values in $\Omega^*(M)$. More precisely, the \emph{equivariant differential $k$-forms} are elements of $\Omega^k_G(M)=\bigoplus_{k=2i+j}(S^i(\gg^*)\otimes \Omega^j(M))^G$. The \emph{equivariant exterior differential} $d_G: \Omega^k_G(M) \rightarrow \Omega^{k+1}_G(M)$ is given by $$(d_G\alpha)(X) = d(\alpha(X)) - i_{X_M}\alpha(X),$$ where $\alpha \in \Omega^k_G(M)$, $X \in \gg$, and $X_M$ is the vector field defined by the infinitesimal action of $X$ on $M$. Note that $d_G^2=0$ by invariance.

Hence, an equivariant form $\alpha \in \Omega^*_G(M)$ is closed if $d(\alpha(X)) - i_{X_M}\alpha(X)=0$ for all $X \in \gg$.

\begin{example}[Equivariant symplectic form]
Let $\omega + \mu$ be the equivariant symplectic form. Then $d_G(\omega+\mu)(X) = d(\omega+ \mu)(X) - i_{X_M}(\omega + \mu)(X).$ But $d\omega=0$ since $\omega$ is the non-equivariant symplectic form, and $i_{X_M}(\mu)=0$ since $\mu$ is a 0-form. So $d_G(\omega+\mu)(X) = d\mu(X)-i_{X_M}\omega(X)$, which is zero by definition of the moment map $\mu$. So the equivariant symplectic form is equivariantly closed.
\end{example}

\begin{thm}[Equivariant de Rham Theorem] Let $G$ be a compact connected Lie group acting on a manifold $M$. Then the equivariant cohomology is given by $$H^*_G(M) = \frac{\ker d_G}{\Imm d_G}.$$ \end{thm}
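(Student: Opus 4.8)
The plan is to build a finite-dimensional smooth bridge between the topological definition $H^*_G(M) = H^*(M_G)$ and the Cartan complex $(\Omega^*_G(M), d_G)$, and to route the comparison through an algebraic model of differential forms on $EG$. The difficulty preventing a direct argument is that $M_G = (M \times EG)/G$ is not a finite-dimensional manifold, so the ordinary de Rham theorem cannot be applied to it as stated; the heart of the proof is the construction of this bridge together with the verification that the Cartan differential $d_G$ is the correct algebraic shadow of the ordinary de Rham differential upstairs.

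First I would set up the approximation. Using a faithful representation $G \hookrightarrow U(n)$, choose a sequence of smooth finite-dimensional manifolds $E_N$ carrying free $G$-actions with $E_N$ being $N$-connected, so that $(M \times E_N)/G$ is a smooth finite-dimensional approximation to $M_G$ and the two spaces share the same cohomology in degrees below $N$. Thus in any fixed degree the equivariant cohomology $H^*_G(M)$ is computed, for $N$ large, by the ordinary de Rham cohomology of the finite-dimensional manifold $(M \times E_N)/G$. Because the diagonal action of $G$ on $M \times E_N$ is free, pullback along the principal bundle projection $M \times E_N \to (M \times E_N)/G$ identifies $\Omega^*((M \times E_N)/G)$ with the subcomplex of basic (that is, horizontal and $G$-invariant) forms in $\Omega^*(M \times E_N) = (\Omega^*(M) \otimes \Omega^*(E_N))$, as complexes; here compactness of $G$ is used to average forms into the invariant part.

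Next I would introduce the Weil algebra $W(\gg) = S(\gg^*) \otimes \Lambda(\gg^*)$ as the algebraic incarnation of $\Omega^*(EG)$. It is acyclic, reflecting the contractibility of $EG$, and carries a $G$-action, a contraction, and a differential making it a differential-graded model for $EG$. A choice of $G$-invariant connection on $E_N \to E_N/G$ induces the Weil (Chern--Weil) homomorphism $W(\gg) \to \Omega^*(E_N)$, which is a quasi-isomorphism in degrees up to roughly $N$ since both sides are acyclic in that range. Tensoring with $\Omega^*(M)$ and passing to $G$-basic subcomplexes yields a quasi-isomorphism, in a range growing with $N$, between the Weil model $(W(\gg) \otimes \Omega^*(M))_{\mathrm{bas}}$ and the basic forms on $M \times E_N$ identified in the previous step.

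It remains to recognise the Weil model as the Cartan complex, and this is where the main difficulty lies. The Mathai--Quillen (Kalkman) automorphism of $W(\gg) \otimes \Omega^*(M)$ intertwines the Weil model with the Cartan complex: it converts the horizontality constraint into precisely the contraction term $-i_{X_M}\alpha(X)$ appearing in $d_G$, leaving only the $G$-invariance condition that defines $\Omega^*_G(M) = \bigoplus_{k=2i+j}(S^i(\gg^*) \otimes \Omega^j(M))^G$. Composing the identifications and taking the direct limit as $N \to \infty$, and using that the Cartan complex does not depend on $N$, gives the asserted isomorphism $H^*_G(M) \cong \ker d_G / \Imm d_G$ in every degree. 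The delicate points to check are the acyclicity of $W(\gg)$, the precise range in which the Weil homomorphism is a quasi-isomorphism, and the compatibility of the Mathai--Quillen automorphism with the basic-subcomplex operation; connectedness of $G$ enters in guaranteeing that the cohomology of invariant forms computes the full equivariant cohomology in the limit.
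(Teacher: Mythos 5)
The paper does not actually prove this theorem: it is stated as standard background in the review subsection and the reader is implicitly referred to the literature (e.g.\ Guillemin--Sternberg and Ginzburg--Guillemin--Karshon), so there is no in-paper argument to compare against. Your outline is the standard Cartan/Guillemin--Sternberg proof and is essentially correct in its architecture: finite-dimensional $N$-connected approximations $E_N$ (Stiefel manifolds) reduce the topological side to ordinary de Rham cohomology of $(M\times E_N)/G$ in a range of degrees; basic forms identify that with a subcomplex of $\Omega^*(M\times E_N)$; the Weil algebra and a connection give the algebraic model; and the Mathai--Quillen/Kalkman automorphism converts the Weil model into the Cartan complex. The one place where your justification is too thin is the assertion that, because $W(\gg)$ and $\Omega^*(E_N)$ are both acyclic in a range, tensoring the Weil homomorphism with $\Omega^*(M)$ and then passing to basic subcomplexes yields a quasi-isomorphism. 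Taking basic subcomplexes is not an exact operation, and a quasi-isomorphism of the ``coefficient'' algebras does not formally induce one on basic subcomplexes; this step is the technical heart of the theorem and requires the comparison theorem for $W^*$-modules (equivalently, a filtration/spectral-sequence or explicit homotopy argument showing that the basic cohomology of $A\otimes\Omega^*(M)$ is independent of the acyclic $W^*$-algebra $A$). You correctly flag this as a delicate point, but it should be named as the load-bearing lemma rather than folded into the acyclicity remark. Two smaller points: compactness of $G$ is what supplies the invariant connection and the averaging, while connectedness is what lets you replace $G$-basic by $\gg$-basic in the algebraic model so that the invariants $(S^i(\gg^*)\otimes\Omega^j(M))^G$ in the paper's definition of $\Omega^*_G(M)$ really do compute $H^*(M_G)$; and the coefficient field must be $\mathbb{R}$ (or $\mathbb{C}$) throughout, which the paper leaves implicit.
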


Suppose $S \subset T$ is a subtorus with Lie algebra $\mathfrak{s}$. A $T$-equivariant form $\alpha$ is an $\Omega(M)$-valued polynomial on $\tt$; by restriction we can view this as an $\Omega(M)$-valued polynomial on $\ss$ and hence as an $S$-equivariant form. If $\alpha$ is $T$-equivariantly closed then $d(\alpha(X)) - i_{X_M}\alpha(X)=0$ for all $X \in \tt$, in which case we certainly have $d(\alpha(X)) - i_{X_M}\alpha(X)=0$ for all $X \in \ss$ and so $\alpha$ is also $S$-equivariantly closed.

Let $\cit$, $e_T$ and $\tdt$ denote the $T$-equivariant first Chern, Euler and Todd classes respectively. For a detailed discussion of these equivariant characteristic classes, see \cite{gs} or \cite{ggk}. 

We are now ready to state the following two theorems, which are the key components of our proof. For more details about these theorems we refer the reader to \cite{ggk}.

\begin{thm}[Equivariant Index Theorem] \label{eqind}Let $(M, \omega, L, \nabla; J)$ be a $G$-manifold, and let $\chi$ denote the character of the quantisation $\qm$. Then, in a neighbourhood of $0\in \gg$, $$\chi \circ exp = \int_M e^{\cit(L)}\tdt(M).$$  

\end{thm}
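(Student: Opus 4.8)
The plan is to identify $\chi$ as the character of the $G$-equivariant index of a Dolbeault-type Dirac operator and then to apply the cohomological form of the Atiyah--Bott--Segal--Singer equivariant index theorem. By construction $\qm = \mathrm{ind}(\dd + \dd^t)$, so $\chi(g) = \mathrm{Tr}(g \mid \ker(\dd + \dd^t)) - \mathrm{Tr}(g \mid \coker(\dd + \dd^t))$ is a virtual character of $G$. Since $\dd + \dd^t$ is the rolled-up Dolbeault operator with coefficients in $L$ (assembling $\Omega^{0,\mathrm{even}}(M;L) \to \Omega^{0,\mathrm{odd}}(M;L)$ into a single elliptic operator), its index density is the Riemann--Roch integrand $\cht(L)\,\tdt(M)$, where $\tdt(M)$ is the equivariant Todd class of the complex tangent bundle $(TM, J)$; and because $L$ is a line bundle one has $\cht(L) = e^{\cit(L)}$.

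The first step is to invoke the delocalised (Berline--Vergne) form of the equivariant index theorem, which states that for $X$ in a neighbourhood of $0 \in \gg$ the character satisfies $\chi(\exp X) = \int_M e^{\cig(L)}\,\mathrm{Td}_G(M)$ evaluated at $X$, the integrand being the equivariant extension in the Cartan model of the ordinary Riemann--Roch integrand. The second step reduces the group from $G$ to $T$: since $\chi$ is a class function and every conjugacy class in $G$ meets $T$, the germ of $\chi \circ \exp$ at $0$ is determined by its restriction to $\tt$. Restricting the $G$-equivariant Cartan forms from $\gg$ to $\tt$ converts $G$-equivariant characteristic classes into $T$-equivariant ones, exactly as in the subtorus restriction discussed above, and so on a neighbourhood of $0 \in \tt$ we obtain $\chi \circ \exp = \int_M e^{\cit(L)}\,\tdt(M)$, which is the claimed identity.

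The main obstacle is justifying the cohomological formula near the identity rather than merely at isolated group elements. In its fixed-point form the Atiyah--Bott--Segal--Singer theorem expresses $\chi(g)$ as a sum of contributions localised at the fixed-point set $M^g$; to pass to the stated integral over all of $M$ one must check that, as $g = \exp X$ with $X \to 0$, these contributions assemble into the integral of the single equivariantly closed Riemann--Roch form, and that the resulting expression is the germ at $0$ of a genuine real-analytic function (the character) rather than a merely formal power series in $X$. This is precisely the content of the equivariant index theorem recorded in \cite{ggk}; once it is granted, the identification of the integrand for the twisted Dolbeault complex and the restriction from $G$ to $T$ are routine.
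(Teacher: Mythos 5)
The paper does not prove this statement: it is quoted as background, with the reader referred to \cite{ggk} for details. Your outline is a correct account of the standard argument (identifying $\qm$ as the index of the rolled-up Dolbeault operator, invoking the delocalised equivariant index theorem for the Riemann--Roch integrand, and restricting from $\gg$ to $\tt$ via conjugation-invariance), and it defers the genuinely hard analytic step to exactly the same source the paper cites, so it is consistent with the paper's treatment.
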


\begin{thm}[Atiyah-Bott-Berline-Vergne Localisation Formula] Let $M$ be a compact, oriented manifold and let the torus $T$ act on $M$ with fixed point set $F$, and let $\iota:F\rightarrow M$ denote the inclusion. Let $\alpha$ be an equivariantly closed form on $M$. Let $NF$ denote the normal bundle to $F$ in $M$. Then $$ \int_M \alpha = \int_F\frac{\iota^*_F \alpha}{e_T(NF)}.$$ In particular, when $F$ is a finite set of isolated points, $$\int_M \alpha = \sum_{p\in F} \frac{\alpha \vert_p}{e_T(T_pM)}.$$ \end{thm}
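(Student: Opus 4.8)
The plan is to prove the general statement $\int_M\alpha = \int_F \iota^*_F\alpha / e_T(NF)$ first and then read off the isolated-point case as a special instance. The organising idea is that although $\alpha$ is only equivariantly closed on $M$, it becomes equivariantly \emph{exact} on the complement $M\setminus F$ of the fixed-point set; Stokes' theorem then pushes the integral over $M$ onto the boundary of a shrinking tubular neighbourhood of $F$, where a local computation produces the Euler class in the denominator.

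First I would fix a generic $X\in\tt$ whose induced vector field $X_M$ vanishes exactly on $F$ (such $X$ exist because $F$ is the common zero locus of the fields $X_M$ and a generic one-parameter subgroup is dense in $T$), and work with the operator $D = d - i_{X_M}$ acting on $T$-invariant forms, on which $D^2 = -\mathcal{L}_{X_M} = 0$; evaluating the equivariantly closed form $\alpha$ at $X$ yields a $D$-closed form. Choosing a $T$-invariant Riemannian metric $g$, set $\theta = g(X_M,\cdot)$. Then $D\theta = d\theta - i_{X_M}\theta = d\theta - |X_M|^2$, whose degree-zero part $-|X_M|^2$ is nowhere vanishing on $M\setminus F$. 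Hence $D\theta$ is invertible there, with inverse given by the finite geometric series $(D\theta)^{-1} = -|X_M|^{-2}\sum_{k\geq 0}\bigl(d\theta/|X_M|^2\bigr)^{k}$, and since $D\theta$ is $D$-closed, so is its inverse. A short Leibniz computation gives $D\bigl(\theta\wedge(D\theta)^{-1}\bigr) = 1$ on $M\setminus F$, so writing $\gamma = \theta\wedge(D\theta)^{-1}$ and using $D\alpha = 0$ we obtain $\alpha = D(\alpha\wedge\gamma)$ there.

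Next I would excise a small $\epsilon$-tubular neighbourhood $U_\epsilon$ of $F$ and apply Stokes on the manifold-with-boundary $M\setminus U_\epsilon$. Since contraction lowers form degree, the $i_{X_M}$ term contributes nothing to the top-degree integral, so $\int_{M\setminus U_\epsilon}\alpha = \int_{M\setminus U_\epsilon} D(\alpha\wedge\gamma) = \int_{M\setminus U_\epsilon} d(\alpha\wedge\gamma) = -\int_{\partial U_\epsilon}\alpha\wedge\gamma$, the sign coming from the induced boundary orientation. Letting $\epsilon\to 0$ gives $\int_M\alpha = -\lim_{\epsilon\to 0}\int_{\partial U_\epsilon}\alpha\wedge\gamma$, reducing everything to a computation near $F$. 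Using the equivariant tubular neighbourhood theorem I would identify a neighbourhood of a component of $F$ with a neighbourhood of the zero section of its normal bundle $NF$, carrying the linear fibrewise $T$-action, and evaluate the boundary term as a fibre integral whose $\epsilon\to 0$ limit is a Gysin-type pushforward equal to $\iota^*_F\alpha / e_T(NF)$; the Euler class appears as the residue of the radial integration against $\theta$. For isolated fixed points $NF|_p = T_pM$ is a linear $T$-representation, the fibre is a sphere $S^{2n-1}$, and the computation collapses to the standard fact that the equivariant integral over the linear model of $T_pM$ is $1/e_T(T_pM)$, yielding the stated sum $\sum_{p\in F}\alpha|_p / e_T(T_pM)$.

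The main obstacle is precisely this last local computation: justifying the equivariant normal form, interchanging the limit with the boundary integral, and extracting the equivariant Euler class as the exact residue, since the algebra on $M\setminus F$ is purely formal whereas the convergence and orientation bookkeeping at $F$ require genuine care. I would also note that both sides are rational functions of $X\in\tt$ — the right-hand side manifestly so, with $e_T$ in the denominator — so it suffices to establish the identity for the dense set of generic $X$ and then extend by analyticity; this is what legitimises fixing a single $X$ at the outset.
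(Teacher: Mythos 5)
The paper does not prove this theorem: it is stated as a known result, with the reader referred to Ginzburg--Guillemin--Karshon for details, so there is no in-paper argument to compare yours against. What you have written is the standard Berline--Vergne proof in the Cartan model, and the outline is sound: the choice of a generic $X\in\tt$ so that $X_M$ vanishes exactly on $F$, the one-form $\theta=g(X_M,\cdot)$ for an invariant metric, the invertibility of $D\theta=d\theta-|X_M|^2$ off $F$ via the finite geometric series, the identity $D\bigl(\theta\wedge(D\theta)^{-1}\bigr)=1$, and the Stokes argument on $M\setminus U_\epsilon$ are all correct, as is your closing remark that rationality in $X$ lets you extend from a single generic $X$ to the full statement. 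The one substantive gap is the one you flag yourself: the terminal computation that $-\lim_{\epsilon\to 0}\int_{\partial U_\epsilon}\alpha\wedge\gamma$ equals $\int_F\iota^*_F\alpha/e_T(NF)$ is asserted rather than performed, and this is precisely where the content of the theorem lives --- everything before it yields only the qualitative statement that $\int_M\alpha$ localises near $F$. To close it you need the equivariant tubular neighbourhood theorem to replace $U_\epsilon$ by the disc bundle in $NF$ with its linear fibrewise action, an explicit evaluation of the fibre integral over the sphere bundle (for an isolated fixed point, over $S^{2n-1}\subset T_pM\cong\cc^n$ with weights $\lambda_1,\dots,\lambda_n$, where the answer is $1/\prod_j\langle\lambda_j,X\rangle=1/e_T(T_pM)(X)$), and a degree-counting or dominated-convergence argument to justify discarding the terms of $\alpha\wedge\gamma$ that do not survive the limit. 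As written, your proposal is a correct and essentially complete reduction of the theorem to that local model computation, but not yet a proof of the formula itself.
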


\subsection{Actions of the Torus and of a Subtorus}

The Equivariant Index Theorem and the Localisation Theorem together give an expression for the character of $\qm$ as a sum of contributions from each fixed point of the torus action on $M$. The fixed point data involved is the $T$-equivariant Chern classes of the line bundle $L$, and the equivariant Todd and Euler classes of the tangent bundle $TM$, all restricted to each fixed point $p$. These classes are all determined by the weights of the torus action on the fibres of the vector bundles above $p$. So we now discuss what the fixed points are and how the torus acts on the fibres above them.

\begin{lem} \label{image} The equivariant first Chern class $\cit(L)\vert_p$ of the fibre of the line bundle $L$ above the point $p \in M$ is the moment map image $\mu(p)$. \end{lem}

\begin{proof} We chose our prequantisation line bundle $L \rightarrow M$ and connection $\nabla$ such that its equivariant curvature form was equal to the equivariant symplectic form $\omega + \mu$. Restricted to a point $p$ this becomes the image $\mu(p)$.
\end{proof}

\begin{lem} \label{wactsonp} The Weyl group $W(G)$ of $G$ acts on the set of fixed points of the $T$ action on $M$. \end{lem}

\begin{proof} Let $p\in M$ be a fixed point of the torus action, and let $wT \in W(G)$. We define the action of $W(G)$ on the fixed point set by $wT \cdot p = w \cdot p$. Since the torus action fixes $p$, this doesn't depend on the coset representative $w$, and so the action is well-defined. We would like to show that $wT \cdot p$ is a fixed point of the torus action. By definition of the Weyl group, if $wT\in W(G)$ and $t \in T$, then $tw = wt'$ for some $t' \in T$. So $t\cdot (w \cdot p) = tw \cdot p = wt' \cdot p = w \cdot (t' \cdot p) = w \cdot p$, so $w \cdot p$ is a fixed point as claimed. \end{proof}

\begin{cor} The fixed points of the torus action can be partitioned into $W(G)$ orbits. \end{cor}

\begin{lem} \label{wactsonweights}Let $\alpha_1, \alpha_2, \ldots , \alpha_n$ be the weights of the representation of $T$ on the tangent space $T_pM$ to the fixed point $p$, and let $w \in \mathcal{N}_G(T)$ be a representative for $wT \in W(G)$. Then the weights of the $T$-representation on $T_{w\cdot p}M$ are $w\cdot \alpha_1, \ldots, w\cdot \alpha_n$.\end{lem}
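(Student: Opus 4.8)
The plan is to realise the action of the Weyl group element on tangent spaces geometrically, via the differential of the diffeomorphism $w : M \to M$ given by the action of a representative $w \in \mathcal{N}_G(T)$, and then to track how $T$ acts under this differential. Since $p$ is a fixed point, Lemma \ref{wactsonp} guarantees that $w \cdot p$ is also a fixed point, so $T$ acts linearly on both $T_pM$ and $T_{w\cdot p}M$ and each space decomposes into weight spaces. The differential $dw_p : T_pM \to T_{w\cdot p}M$ is a linear isomorphism, and I would show that it carries a weight vector of weight $\alpha_i$ to a weight vector of weight $w\cdot\alpha_i$; since $dw_p$ is an isomorphism, this identifies the full list of weights.

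First I would fix a weight vector $v \in T_pM$ with $t\cdot v = \alpha_i(t)\,v$ for all $t\in T$ (viewing each $\alpha_i$ as a character of $T$), and compute the action of $t\in T$ on $dw_p(v) \in T_{w\cdot p}M$. By the chain rule this action is the differential at $p$ of the composite map $t\circ w$ on $M$. The essential ingredient is the normaliser relation: since $w \in \mathcal{N}_G(T)$, the element $t' := w^{-1}tw$ lies in $T$, and $t\circ w = w\circ t'$ as maps $M \to M$. Applying the chain rule to this factorisation and using that $t'\cdot p = p$ (so that $dw_{t'\cdot p} = dw_p$), I obtain
$$ t\cdot dw_p(v) = dw_p\bigl(t'\cdot v\bigr) = \alpha_i(w^{-1}tw)\,dw_p(v). $$
Thus $dw_p(v)$ is a weight vector whose weight is the character $t\mapsto \alpha_i(w^{-1}tw)$, which by definition of the Weyl group action on $\tt^*$ is exactly $w\cdot\alpha_i$.

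I expect the only delicate point to be the bookkeeping of conventions: one must confirm that the character $t\mapsto\alpha_i(w^{-1}tw)$ is indeed what is meant by $w\cdot\alpha_i$ under the chosen normalisation of the $W(G)$-action on weights, and that the assignment is independent of the coset representative $w$. The latter follows because replacing $w$ by $wt_0$ (with $t_0\in T$) leaves $w\cdot p$ unchanged and, since $T$ is abelian, leaves the character $t\mapsto\alpha_i(w^{-1}tw)$ unchanged as well; changing the representative merely rescales $dw_p$ on each weight space without altering the weights themselves. Running over a weight basis of $T_pM$ then yields the weights $w\cdot\alpha_1,\dots,w\cdot\alpha_n$ on $T_{w\cdot p}M$, completing the argument.
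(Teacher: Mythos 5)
Your proposal is correct and follows essentially the same route as the paper: both arguments push a weight vector from $T_pM$ to $T_{w\cdot p}M$ via the action of $w$ and use the normaliser relation $tw = wt'$ with $t' = w^{-1}tw \in T$ to identify the new weight as $w\cdot\alpha_i$. Your phrasing in terms of the differential $dw_p$ and the character-level identity $\alpha_i(w^{-1}tw) = (w\cdot\alpha_i)(t)$ is just a more explicit rendering of the paper's computation (which works infinitesimally via $\exp X = t$), and your added check of independence from the coset representative is a harmless refinement.
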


\begin{proof} Let $\alpha$ be a weight of the torus action on the tangent space $T_pM$, and let $V_\alpha \subset T_pM$ be the weight space of $\alpha$. Let $v\in V_\alpha \subset T_pM$, and let $X \in \tt$ with $\exp X = t$. By definition this means $t \cdot v = e^{\langle \alpha, X \rangle}v$. We wish to know how $T$ acts on $w\cdot v$. Since $w \in \mathcal{N}_G(T)$, we know $w^{-1}tw = t'$ for some $t' \in T$. Suppose $t' = expY$. So $t\cdot (w \cdot v) = (tw)\cdot v = wt' \cdot v = w \cdot (t' \cdot v) = w \cdot (e^{\langle \alpha, Y \rangle}v) = e^{\langle \alpha, Y \rangle}w\cdot v $. But $\exp Y = w^{-1}\exp X w$, so $Y = w^{-1} X w$ since the exponential map commutes with the adjoint action. So $\langle \alpha, Y \rangle = \langle \alpha, w^{-1} X w \rangle = \langle w\cdot\alpha, X \rangle$, so $t\cdot w \cdot v = e^{\langle w \cdot\alpha, X \rangle}w\cdot v $. That is, $w\cdot \alpha$ is a weight of the $T$ action on the tangent space $T_{w\cdot p}M$, as required. \end{proof}

If we apply localisation to the equivariant index theorem, we get a formula for the character of the $G$-representation in terms of torus characters. We combine these torus characters into $K$-characters. Geometrically, this arises as follows.

Let $\ss \subseteq \tt$ be the algebra $\{\zeta \in \tt \vert \langle \alpha , \zeta \rangle = 0 \forall \alpha \in \Phi(K)\}$, and let $S = \exp(\ss)$. Then $S\subseteq T$ is the maximal torus of the centraliser of $K$ in $G$. 

\begin{lem} The fixed manifolds of the $S$-action are preserved by the $K$-action.
\end{lem}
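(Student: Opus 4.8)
The plan is to exploit the fact, recorded immediately before the statement, that $S$ is the maximal torus of the centraliser of $K$ in $G$. This means every element of $S$ commutes with every element of $K$, and it is precisely this commutativity that will force the $K$-action to respect the $S$-fixed locus. First I would record the commutativity at the Lie-algebra level. By definition $\ss = \{\zeta \in \tt : \langle \alpha, \zeta\rangle = 0 \ \forall \alpha \in \Phi(K)\}$ consists of the elements of $\tt$ annihilated by every root of $K$, and such elements are central in $\kk$, so $[\ss, \kk] = 0$. Since $K$ is connected, exponentiating yields $sk = ks$ for all $s \in S$ and $k \in K$.

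Next I would show that $K$ preserves the fixed-point set $M^S = \{p \in M : s\cdot p = p \ \forall s \in S\}$ as a set. Given $p \in M^S$, $k \in K$ and $s \in S$, the commutativity gives
$$ s \cdot (k \cdot p) = (sk)\cdot p = (ks)\cdot p = k \cdot (s\cdot p) = k\cdot p,$$
so $k\cdot p \in M^S$. Thus $K \cdot M^S \subseteq M^S$, and since $K$ acts by diffeomorphisms this inclusion is in fact an equality.

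Finally, I would upgrade this to the assertion that each individual fixed manifold — that is, each connected component $F$ of $M^S$ — is preserved, rather than merely the union of all of them. Here I would invoke the connectedness of $K$: the action map $K \times F \to M^S$ is continuous with connected domain, so its image lies in a single connected component of $M^S$, and since that image contains $e\cdot F = F$ the component in question must be $F$ itself. Hence $K\cdot F \subseteq F$, and invertibility of the action gives $K \cdot F = F$.

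The only genuinely delicate point is this last step: set-theoretic invariance of $M^S$ does not by itself guarantee invariance of each component, and the refinement rests essentially on the connectedness of $K$ together with the continuity of the action. Everything else reduces to the elementary observation that commuting group actions preserve one another's fixed loci.
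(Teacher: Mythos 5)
Your proof is correct and follows essentially the same route as the paper's: the commutativity computation $s\cdot(k\cdot p) = k\cdot(s\cdot p) = k\cdot p$ shows $K$ preserves $M^S$, and connectedness of $K$ then forces each connected component of $M^S$ to be preserved. The only difference is that you additionally justify why $S$ centralises $K$ (via $[\ss,\kk]=0$), a fact the paper simply asserts in the sentence preceding the lemma.
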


\begin{proof} Suppose $x \in M$ is fixed by the $S$ action. We claim that $k\cdot x$ is also fixed by $S$, for all $k \in K$. But $S$ is contained in the centraliser of $K$, so indeed $s\cdot k \cdot x = k \cdot s \cdot x = k \cdot x$. Since $K$ is connected, the $K$-action preserves connected components of the fixed set of $S$. \end{proof}

\begin{cor} \label{fixedmfldsofsarekspaces} Each fixed manifold $F$ of the $S$-action is a Hamiltonian $K$-space. In particular, $W(K)$ acts on the set of fixed points $p_i$ of the torus action that are contained in $F$, and on the weights of the torus action on $\tpif$. \end{cor}

Let $p\in M$ be a fixed point of the torus action, and consider the action of $T$ on the tangent space $T_pM$. The tangent space can be broken up into a direct sum of 2-dimensional weight spaces. The point $p$ is contained in some manifold $F$ fixed by the $S$-action. Denote by $NF$ the normal bundle to $F$ in $M$; since the torus action preserves $F$, the decomposition $T_pM=T_pF \oplus N_pF$ respects the weight decomposition. 
 
Recall that an element $\xi$ of a Lie algebra $\kk$ is \emph{regular} if the dimension of its centraliser is minimal among all centralisers of elements of $\kk$. For Lie algebras of compact Lie groups this means that the centraliser of $\xi$ is a maximal torus of $\kk$. Write $\kk^*_{reg}$ for the set of regular elements of $\kk^*$, and $\tt^*_{reg}$ for the intersection of $\kk^*_{reg}$ with $\tt^*$. 

Since the action of $K$ preserves $F$, if $\mu(p)$ is in $\tt^*_{reg}$ then $T_pF$ contains a copy of $\kk/\tt$; in particular the set of weights of the representation on $T_pF$ contains either $\phi$ or $-\phi$ for all positive roots $\phi$ of $K$. Note however that unlike Guillemin and Prato in \cite{gp} we are not assuming all of the $\mu(p)$ are regular; this allows us to consider actions on non-generic coadjoint orbits.  If $\mu(p)$ is not in $\ttr$ then it's fixed by some of the $w_\alpha$, where $w_\alpha \in W(K)$ is the element of the Weyl group that acts on $\tt^*$ by reflection in the hyperplane orthogonal to $\alpha$. These $\alpha$ are then not necessarily weights of the torus action on $T_pF$.

\subsection{Some Notation}

Let $P=\{p_1, \ldots, p_n\}$ be the set of fixed points of the torus action on $M$. For the fixed point $p_i$, let $A_i$ be the set $\{\alpha \in \pk \vert \langle \alpha , \mu(p_i) \rangle =0\}$ of roots $\alpha$ of $K$ such that $\mu(p_i)$ lies on the hyperplane orthogonal to $\alpha$ (and so $w_\alpha$ fixes $\mu(p_i)$). Let $\mathcal{A}_i$ be the group generated by the $w_\alpha$, for $\alpha \in A_i$, and let $U_i$ be the set of left cosets of $\mathcal{A}_i$ in $W(G)$. If $X$ is an $\mathcal{A}_i$-invariant set then we define the action of $U_i$ on $X$ by $[w]\cdot x = w \cdot x$, where $w \in W(G)$, the coset of $\mathcal{A}_i$ containing $w$ is denoted $[w]$, and $x \in X$; this action does not depend on the choice of coset representative and so is well defined. For ease of notation we will often drop the square brackets and refer to elements $w \in U_i$.

 If $w\cdot p_i = p_j$, we will also write $w\cdot i = j$. Let $P^+$ be the set of fixed points which map into the closed positive Weyl chamber $\overline{\mathcal{W}_K}$ under the moment map.
 
Given a fixed point $p_i \in M$, let $F$ be the fixed manifold of the $S$ action containing $p_i$. Let $\{\alpha_{ij}\}^{a_i}_{j=1}$ denote the weights of the torus action on the tangent space to the $K$-orbit of $p_i$ (these form a subset of $\Phi(K)$), let $\{\beta_{ij}\}^{f_i}_{j=1}$ denote the remaining weights of $\tpif$, and let $\{\beta_{ij}\}^{n_i}_{j=f_i+1}$ denote the weights of $(NF)_{p_i}$. We write $B_i = \{\beta_{ij} \vert 1 \leq j \leq n_i \}$. We will polarise the weights as follows. Choose a $\xi \in \tt$ such that $\langle \alpha , \xi \rangle = 0$ for all $\alpha \in W(K)$, and $\langle \lambda , \xi \rangle \neq 0$ for all other weights $\lambda$ of the torus action on tangent spaces to fixed points. So $W(K)$ preserves $\xi$. Note that there may be some weights $\beta_{ij}$ of the torus action on the normal bundle to the $K$-orbit of $p_i$ which also happen to be roots of $K$; such $\beta_{ij}$ will have $\langle \beta_{ij}, \xi \rangle = 0$. Define \emph{polarised} weights

\[ \beta_{ij}^+ = \begin{cases}
    \beta_{ij}, & \beta_{ij}(\xi) \geq 0\\
    -\beta_{ij}, & \beta_{ij}(\xi) < 0
  \end{cases},\]
and for fixed $i$, let $s_i$ be the number of $j$ with $\beta_{ij}(\xi) <0$. Notice that we've chosen $\xi$ such that $s_i = s_{w\cdot i}$ for all $w \in W(K)$.

For the fixed point $p_i$, write $\beta_i = \frac{1}{2}\sum_j \beta_{ij}$. Let $\bb_i$ be the set $\{w \cdot \beta_{ij} \vert w \in W(K), \beta_{ij} \in B_i\}$, and let $\bb^+_i$ be the set of polarised weights $\{\beta^+ \vert \beta \in \bb_i\}$. Let $\overline{\beta_i}=\sum_{\beta \in B_i: \beta = -\beta^+}\beta$. Let $C_i$ be the set of weights $\beta \in \bb_i^+$ that are not equal to $\beta_{ij}^+$ for any $\beta_{ij} \in B_i$; that is, polarisations of all weights in the orbit except for those at the point $p_i$.  Let $m_i(\eta)$ be the multiplicity of $e^\eta$ in $\prod_{\gamma \in C_i}(1-e^{-\gamma})$.

\subsection{The Main Theorem}

\begin{thm} \label{main} Let $G$ be a compact Lie group with maximal torus $T$. Let $(M, \omega, L, \nabla; J)$ be a Hamiltonian $G$-space such that the $T$-action has isolated fixed points. Let $K \subset G$ be a closed connected subgroup with maximal torus $T$.  Write $V^K_\lambda$ for the irreducible $K$-representation of highest weight $\lambda$. Then the character $\chi$ of the quantisation $Q(M, \omega, L, \nabla; J)$ is given by 
\begin{equation}
\label{mainthm}
\chi(\qm) = \sum_{F \in \pi_0(M^S)} \sum_{p_i \in F \cap P^+} \frac{\sum_{\eta \in \Lambda}(-1)^{s_i}m_i(\eta) \chi(V^K_{\mu(p_i)+\overline{\beta_i}+\eta})}{ \prod_{\gamma \in \bb^+_i} (1-e^{-\gamma})}.
\end{equation}
 \end{thm}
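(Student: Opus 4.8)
The plan is to run the equivariant index theorem, localise to the isolated $T$-fixed points, and then reassemble the resulting torus contributions into $K$-characters one $W(K)$-orbit at a time. First I would apply Theorem~\ref{eqind} to write $\chi\circ\exp=\int_M e^{\cit(L)}\tdt(M)$, and then apply the Atiyah--Bott--Berline--Vergne formula at the isolated fixed points $p_i$. Using Lemma~\ref{image} to replace $\cit(L)\vert_{p_i}$ by $\mu(p_i)$, together with the standard weight expressions $\tdt(M)\vert_{p_i}=\prod_\gamma \frac{\gamma}{1-e^{-\gamma}}$ and $e_T(\tpim)=\prod_\gamma\gamma$ (the products running over the weights $\gamma$ of the $T$-action on $\tpim$), each fixed-point contribution collapses to $e^{\mu(p_i)}/\prod_\gamma(1-e^{-\gamma})$. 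This yields the torus-level formula $\chi=\sum_{p_i\in P} e^{\mu(p_i)}/\prod_\gamma(1-e^{-\gamma})$.

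Next I would organise this sum geometrically. By Corollary~\ref{fixedmfldsofsarekspaces} the fixed points group into the $S$-fixed manifolds $F\in\pi_0(M^S)$, each a Hamiltonian $K$-space on which $W(K)$ permutes the $T$-fixed points and, by Lemma~\ref{wactsonweights}, their tangent weights. Within each $F$ I would take the points of $F\cap P^+$ as representatives of the $W(K)$-orbits, with $p_i$ having stabiliser $\mathcal{A}_i$, and split the tangent weights at each orbit point into the $K$-orbit directions $\{\alpha_{ij}\}\subset\Phi(K)$ and the remaining weights $B_i$ of $\tpif$ and $\npif$. The goal is then to show that the sum of the torus contributions over a single $W(K)$-orbit equals the corresponding summand of \eqref{mainthm}.

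The heart of the argument is this per-orbit computation. At $p_i$ I would polarise the weights in $B_i$ using $\xi$: rewriting $1/(1-e^{-\beta})=-e^{-\beta^+}/(1-e^{-\beta^+})$ for each $\beta$ with $\beta(\xi)<0$ produces exactly the sign $(-1)^{s_i}$ and the numerator shift $e^{\overline{\beta_i}}$, leaving the denominator $\prod_{\beta\in B_i}(1-e^{-\beta^+})$. Multiplying numerator and denominator by $\prod_{\gamma\in C_i}(1-e^{-\gamma})$ then promotes the denominator to the full orbit product $\prod_{\gamma\in\bb^+_i}(1-e^{-\gamma})$ and inserts the factor $\sum_\eta m_i(\eta)e^\eta$ into the numerator. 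Since $\xi$ is $W(K)$-invariant, polarisation commutes with $W(K)$, so that $(w\beta)^+=w\beta^+$ and $s_{w\cdot i}=s_i$; consequently the denominator $\prod_{\gamma\in\bb^+_i}(1-e^{-\gamma})$ is $W(K)$-invariant and can be pulled out of the orbit sum. What remains inside is $\sum_{[w]}e^{w(\mu(p_i)+\overline{\beta_i}+\eta)}/\prod_j(1-e^{-w\alpha_{ij}})$, which I would identify with $\chi(V^K_{\mu(p_i)+\overline{\beta_i}+\eta})$ via the Weyl character formula in localisation form $\chi(V^K_\nu)=\sum_{w\in W(K)}e^{w\nu}/\prod_{\phi\in\pk}(1-e^{-w\phi})$ applied to the $K$-orbit of $p_i$.

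The main obstacle is precisely this last identification, because I am not assuming the $\mu(p_i)$ are regular. When $\mu(p_i)$ is regular the $K$-orbit is $K/T$, the $\{\alpha_{ij}\}$ are the full holomorphic half $\pk$, $\mathcal{A}_i$ is trivial, and the identity above is immediate. When $\mu(p_i)$ lies on walls, the orbit is a partial flag manifold $K/K_{\mu(p_i)}$, the $\{\alpha_{ij}\}$ are only the holomorphic roots outside $A_i$, and the sum runs over the cosets $W(K)/\mathcal{A}_i$; here I would invoke the parabolic form of the Weyl character formula, first checking that each per-orbit summand is well defined on cosets (using that $\mathcal{A}_i$ fixes both $\mu(p_i)$ and $\overline{\beta_i}$, preserves the weighted set $\{(\eta,m_i(\eta))\}$, and permutes $\pk\setminus A_i$), and then interpreting $\chi(V^K_\lambda)$ for non-dominant or singular $\lambda$ as the virtual character obtained from the reflection $\lambda\mapsto w(\lambda+\rho)-\rho$. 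Assembling these per-orbit identities over all $p_i\in F\cap P^+$ and all $F\in\pi_0(M^S)$ then yields \eqref{mainthm}.
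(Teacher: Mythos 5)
Your proposal follows the paper's proof essentially step for step: equivariant index theorem, Atiyah--Bott--Berline--Vergne localisation at the isolated $T$-fixed points, grouping by the $S$-fixed manifolds and then by $W(K)$-orbits with representatives in $F\cap P^+$, polarisation producing the sign $(-1)^{s_i}$, the shift $e^{\overline{\beta_i}}$ and the factor $\prod_{\gamma\in C_i}(1-e^{-\gamma})$, and promotion of the denominator to the $W(K)$-invariant product $\prod_{\gamma\in\bb^+_i}(1-e^{-\gamma})$ before recognising the numerator as a sum of (virtual) $K$-characters. The only cosmetic difference is at the singular points: where you invoke the parabolic Weyl character formula on $K/K_{\mu(p_i)}$ after checking well-definedness on cosets, the paper proves the equivalent identity $\sum_{w\in\mathcal{A}_i}\prod_{\alpha\in A_i}(1-e^{-w\cdot\alpha})^{-1}=1$ (Lemma~\ref{trivrepn}) and multiplies through to convert the coset sum over $U_i$ into a sum over all of $W(K)$.
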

 
\begin{proof} The equivariant index theorem (Theorem \ref{eqind}) tells us that $\chi \circ \exp = \int_M e^{c_{1,T}(L)}Td_T(M)$. By localisation with respect to the $T$ action we get 
\begin{equation}
\chi \circ \exp = \sum_{p_i \in M^T} \frac{\bigl(e^{c_{1,T}(L)}Td_T(M)\bigr)\big|_{p_i}}{e_T(T_{p_i}M)}.
\end{equation}
Let $M^S$ denote the fixed set of the $S$ action on $M$. Since every fixed point $p_i \in M^T$ is contained in $M^S$, we can write $M^T$ as the disjoint union over all $F \in \pi_0(M^S)$ of the fixed sets $F^T$ of the $T$ action on $F$. So
\begin{equation}
\chi \circ \exp = \sum_{F \in \pi_0(M^S)} \sum_{p_i \in F^T} \frac{\bigl(e^{c_{1,T}(L)}Td_T(M)\bigr)\big|_{p_i}}{e_T(T_{p_i}M)}.
\end{equation}
At a fixed point $p_i \in F$, we can write $TM\vert_{p_i}$ as $T_{p_i}F \oplus (NF)_{p_i}$. Hence $Td_T(TM)_{p_i} = Td_T(T_{p_i}F)Td_T(NF)_{p_i}$, and $e_T(TM)_{p_i} = e_T(T_{p_i}F)e_T(NF)_{p_i}$. So 
\begin{equation} \label{allbrokenup}
\chi \circ \exp = \sum_{F \in \pi_0(M^S)}\sum_{p_i \in F^T}\frac{e^{(\cit(L))_{p_i}}\tdt(\tpif)\tdt(\npif)}{e_T(\npif)e_T(\tpif)}.
\end{equation}
Recalling the definitions of the equivariant characteristic classes and lemma \ref{wactsonweights}, we can write
\begin{equation}e_T(\tpif) = \prod_j \alpha_{ij}\prod_{k=1}^{f_i} \beta_{ik},
\end{equation}
\begin{equation}
\tdt(\tpif) = \displaystyle \frac{e_T(\tpif)}{\prod_j (1-e^{-\alpha_{ij}})\prod_{k=1}^{f_i} (1-e^{-\beta_{ik}})},
\end{equation}
\begin{equation}
e_T(\npif) =\displaystyle \prod_{l=f_i+1}^{m_i} \beta_{il},
\end{equation}
\begin{equation}
\tdt(\npif)= \frac{\displaystyle \prod_{l=f_i+1}^{m_i} \beta_{il}}{\displaystyle \prod_{l=f_i+1}^{m_i} (1-e^{-\beta_{il}})},
\end{equation}
\begin{equation}
e^{(\cit(L)\vert_F)_{p_i}}=e^{\mu(p_i)}.
\end{equation}
And so our formula becomes 
\begin{equation}\chi \circ \exp = \sum_{F \in \pi_0(M^S)}\sum_{p_i \in F^T} \frac{e^{\mu(p_i)}}{\prod_j (1-e^{-\alpha_{ij}})\prod_{k=1}^{m_i} (1-e^{-\beta_{ik}})}.
\end{equation}

We now organise this formula into characters of representations of $K$. By Lemma \ref{wactsonp}, $W(K)$ acts on the fixed points of the fixed manifolds $F$. This action partitions the fixed points contained in $F$ into $W(K)$-orbits; each orbit contains $\vert U_i \vert$ points, and has a unique representative $p_i$ whose image $\mu(p_i)$ lies in the closed positive Weyl chamber $\overline{\mathcal{W}_K}$. For given $F$, the set of such orbit representatives is $F \cap P^+$. 

Recall from Lemma \ref{wactsonweights} that the action of $w \in W(K)$ takes weights of the tangent space at $p_i$ to weights of the tangent space at $w\cdot p_i$. So we can write our formula more suggestively as 
\begin{equation}
\label{suggestive}
\chi \circ \exp = \sum_{F \in \pi_0(M^S)} \sum_{p_i \in F \cap P^+} \sum_{w \in U_i}\dfrac{\frac{e^{w(\mu(p_i))}}{\prod_j (1-e^{-w\cdot\alpha_{ij}})}}{\prod_{\beta \in B_i} (1-e^{-w\cdot\beta})}.
\end{equation}
Recall that an irreducible $K$-character has the form $\sum_{w \in U_i}\frac{e^{w\cdot \lambda}}{\prod_\alpha(1-e^{-w\cdot \alpha})}$, where the $\alpha$ are the positive roots of $K$ whose inner product with $\lambda$ is non-zero. (When $\lambda$ is regular, this is all the positive roots and this becomes the more familiar $\sum_{w \in W(K)}\frac{e^{w\cdot \lambda}}{\prod_{\phi \in \pk}(1-e^{-w\cdot \phi})}$.) So to get irreducible $K$-characters, we would like to rewrite 
\begin{equation}
\label{rewritethis}
\sum_{w \in U_i}\dfrac{\frac{e^{w(\mu(p_i))}}{\prod_j (1-e^{-w\cdot\alpha_{ij}})}}{\prod_{\beta \in B_i} (1-e^{-w\cdot\beta})}
\end{equation} as 
\begin{equation}\frac{\sum_{w \in U_i}\frac{e^{w\cdot \mu(p_i)}w\cdot R}{\prod_j(1-e^{-w\cdot \alpha_{ij}})}}{H},
\end{equation} where $R = \sum_{\tau \in E}c_\tau e^\tau$, for $E$ a finite subset of the integral weight lattice, and $c_\tau \in \mathbb{Z}$. Equivalently, we wish to rewrite $(\prod_{\beta \in B_i}(1-e^{-\beta}))^{-1}$ as $R/H$ with $H$ required to be $W(K)$-invariant. One $W(K)$-invariant set we have is the set $\bb_i$ of all weights of the torus action on the spaces $T_{w\cdot p_i}M/T_{p_i}(K\cdot p_i)$ for $w$ ranging over $W(K)$. In fact, since we chose $\xi$ to be fixed by $W(K)$, we know $\langle w\cdot \beta, \xi \rangle = \langle \beta , \xi \rangle$ and so $W(K)$ also preserves the set $\bb_i^+$ of polarised weights. Let us therefore take 
\begin{equation}
H = \prod_{\gamma \in \bb^+_i}(1-e^{-\gamma}).
\end{equation}
Thus we find that 
\begin{equation*}
R = \dfrac{\prod_{\gamma \in \bb^+_i}(1-e^{-\gamma})}{\prod_{\beta \in B_i}(1-e^{-\beta})}.
\end{equation*}
Consider $\beta \in B_i$. If $\beta = \beta^+$, then $\beta \in \bb^+_i$, so the corresponding terms cancel. If $\beta = -\beta^+$, then $-\beta \in \bb^+_i$, so we can replace the corresponding terms with a $-e^\beta$. By definition the number of $\beta \in B_i$ with $\beta = -\beta^+$ is $s_i$, and so we are left with 
\begin{equation}
R = (-1)^{s_i}e^{\sum_{\beta \in B_i:\beta=-\beta^+}\beta}\prod_{\gamma \in C_i}(1-e^{-\gamma}),
\end{equation}
where $C_i$ is the set of all polarised weights $\gamma \in \bb_i^+$ of the torus action on the tangent spaces to the whole orbit of fixed points such that neither $\gamma$ nor $-\gamma$ is in $B_i$; that is, neither $\gamma$ nor $-\gamma$ is a weight of the torus action on the tangent space $\tpim$ at the particular fixed point $p_i$. Write $\overline{\beta_i}$ for $\sum_{\beta \in B_i: \beta = -\beta^+}$. So we have rewritten \eqref{rewritethis} as 
\begin{equation}
\label{rewritten}
\frac{\sum_{w \in U_i}w\cdot\left(\frac{e^{\mu(p_i)}(-1)^{s_i}e^{\overline{\beta_i}}\prod_{\gamma \in C_i}(1-e^{-\gamma})}{\prod_j (1-e^{-\alpha_{ij}})}\right)}{\prod_{\gamma \in \bb^+_i}(1-e^{-\gamma})}.
\end{equation}

\begin{lem} \label{trivrepn} Let $A_i \subset \pk$ be the set $\{\alpha \in \pk \vert \langle \alpha, \mu(p_i) \rangle = 0\}$, and let $\mathcal{A}_i$ be the subgroup of $W(K)$ generated by the reflections $w_\alpha$ for $\alpha \in A_i$. Then $$\sum_{w\in \mathcal{A}_i}\frac{1}{\prod_{\alpha \in A_i}(1-e^{-w\cdot \alpha})}=1.$$
\end{lem}
\begin{proof} Let $s_w$ denote the number of $\alpha$ in $A_i$ such that $w\cdot \alpha$ is a negative root of $K$, and let $\rho_{A_i}$be half the sum of the elements of $A_i$. Then $$\sum_{w\in \mathcal{A}_i}\frac{1}{\prod_{\alpha \in A_i}(1-e^{-w\cdot \alpha})}=\frac{\sum_{w\in \mathcal{A}_i}(-1)^{s_w}e^{w\cdot \rho_{A_i}}}{\prod_{\alpha}(e^{\alpha/2}-e^{-\alpha/2})}.$$
We claim that this is equal to $1$. Let $\Psi \subset A_i$ be a set of simple roots (so that each $\alpha \in A_i$ can be expressed uniquely as a non-negative integer combination of the $\psi \in \Psi$). Let $x=(x_1, \ldots, x_{\vert A_i \vert}) \in \{-1,1\}^{\vert A_i\vert }$, and consider the term $X=e^{\sum x_i \alpha_i/2}$ in the expansion of the product in the denominator. 

If for all $i, j, k$ for which $\alpha_i = \alpha_j+\alpha_k$ and $x_j = x_k$ we also have $x_i =x_j$, then $\sum x_i \alpha_i/2 = w\cdot \rho_{A_i}$ for some $w \in \mathcal{A}_i$, and the coefficient of $X$ in the denominator is $\prod x_i = (-1)^{s_w}$. Every $e^{w\cdot \rho_{A_i}}$ appears once in the denominator.  

Otherwise, suppose there are $i, j, k$ with $\alpha_i = \alpha_j + \alpha_k$ and $x_j = x_k$ but $x_i = -x_j$. Then there's another term $\overline{X} = e^{\sum \overline{x_l}\alpha_l/2 }$ where $\overline{x_l}=-x_l$ for $l = i, j, k$ and $\overline{x_l}=x_l$ otherwise. Then $X$ and $\overline{X}$ both appear once in the expansion of the denominator, each with opposite sign and so they cancel. 

Thus we are left with $\prod_\alpha(e^{\alpha/2}-e^{-\alpha/2})=\sum_w (-1)^{s_w}e^{w\cdot \rho_{A_i}}$, which proves the lemma.  
\end{proof}

By multiplying \eqref{rewritten} by $\sum_{w\in \mathcal{A}_i}(\prod_{\alpha \in A_i}(1-e^{-w\cdot \alpha}))^{-1}$, we get
\begin{equation}
\frac{\sum_{w \in W(K)}w\cdot\left(\frac{e^{\mu(p_i)}(-1)^{s_i}e^{\overline{\beta_i}}\prod_{\gamma \in C_i}(1-e^{-\gamma})}{\prod_{\phi \in \pk} (1-e^{-\phi})}\right)}{\prod_{\gamma \in \bb^+_i}(1-e^{-\gamma})},
\end{equation}
which is a quotient of $K$-characters, where the $K$-character of highest weight $\mu(p_i)+\overline{\beta_i}+\lambda$ appears with multiplicity equal to the multiplicity of $e^\lambda$ in $(-1)^{s_i}\prod_{\gamma \in C_i}(1-e^{-\gamma})$. Write $V^K_\lambda$ for the irreducible $K$-representation of highest weight $\lambda$, and $m_i(\eta)$ for the multiplicity of $e^\eta$ in $\prod_{\gamma \in C_i}(1-e^{-\gamma})$. Then summing over all orbits we get 

$$\chi(\qm) = \sum_{F \in \pi_0(M^S)} \sum_{p_i \in F \cap P^+} \frac{\sum_{\eta \in \Lambda}(-1)^{s_i}m_i(\eta) \chi(V^K_{\mu(p_i)+\overline{\beta_i}+\eta})}{ \prod_{\gamma \in \bb^+_i} (1-e^{-\gamma})},$$
which is equation \eqref{mainthm}, as claimed. \end{proof}

\begin{remark} There are at most $2^{\vert C_i \vert}$ points $\eta \in \Lambda$ where $m_i(\eta)$ is non zero.
\end{remark} 

\begin{remark} Although the proof of our main theorem only requires the $T$ action, considering the $S$ action as well helps with the geometric intuition. Consider one connected component $F \in \pi_0(M^S)$ of the fixed set of the $S$-action. By Corollary \ref{fixedmfldsofsarekspaces} $F$ is a Hamiltonian $K$-space, so the character of the quantisation of the restriction of $L$ to $F$ is given by $\int_F e^{c_{1,T}(L)\vert_F}Td_T(F)$. This is an integral of $S$-equivariant forms, so we can apply localisation with respect to the $S$ action on $F$ to get $\sum_{p \in F^T} \frac{e^{c_{1,T}(L)_p}Td_T(T_pF)}{e_T(T_pF)}$. These terms, for each $F \in \pi_0(M^S)$, appear in the formula \eqref{allbrokenup} for the full character $\chi \circ \exp$, each multiplied by $Td_T(NF)_p / e_T(NF)_p$. Geometrically, this tells us that the way a few $K$-characters combine to form a $G$-character is a consequence of the way that the fixed manifolds of the $S$-action -- each of which is a $K$-space -- are embedded in the $G$-space. \end{remark}  

\subsection{Example}
\begin{example}
Let $G = SU(3)$ and let $K\subset G$ be $S(U(2)\times U(1))$. Denote by $\alpha, \beta$ and $\gamma$ the positive roots of $G$, where $\beta = \alpha + \gamma$, and let $\alpha$ be the positive root of $K$. Choose $\xi$ such that the polarised roots are the positive roots. Let $\nu = \beta + \gamma$, and let $M$ be  the coadjoint orbit $G.\nu\cong G/K$. In this case the moment map is the inclusion $i: G\cdot \nu \hookrightarrow \gg^*$ composed with the projection $\pi: \gg^* \rightarrow \tt^*$.

The centraliser of $K$ in $G$ is the subtorus $S$ consisting of matrices of the form \[ \left( \begin{array}{ccc}
e^{i\theta} & 0& 0 \\
0 & e^{i\theta} & 0 \\
0 & 0 & e^{-2i\theta} \end{array} \right),\] for $\theta \in [0,2\pi)$. The $T$ action on $G/K$ has three fixed points: their moment map images are $\nu$, $w\cdot \nu$, and $w^2\cdot \nu$ where $w$ is one of the rotations in $W(G)$. Let's assume $w\cdot \nu$ is in the dominant Weyl chamber $W_K$. These fixed points lie in the two fixed manifolds of the $S$-action: the point $\nu$, and the copy of $K/T\cong \mathbb{P}^1$ which maps to the edge between $w\cdot \nu$ and $w^2\cdot \nu$ under the moment map. Each fixed manifold contains just one $W(K)$-orbit of fixed points. 

For the fixed manifold $F \cong \mathbb{P}^1$, the weights of tangent spaces are $-\gamma$ (at $T_{w\cdot p}G/K$) and $-\beta$ (at $T_{w^2\cdot p}G/K$). So the denominator will be $(1-e^{-\beta})(1-e^{-\gamma})$. Each fixed point in this $W(K)$-orbit has one negative weight on its tangent space that's not a root of $K$, so $s_i = 1$. Here $\overline{\beta_i}$ is the sum of the negative weights at $w\cdot \gamma$, which is $-\gamma$. Recall that $C_i$ is the set of weights $\delta$ of the $T$ action on tangent spaces to $M$ at fixed points of the orbit of $p_i$ such that neither $\delta$ nor $-\delta$ is a weight of the tangent space at $p_i$, where $p_i$ is the fixed point in the dominant Weyl chamber for $K$; in this case $p_i$ is $w\cdot \nu$ and $C_i$ is $-\beta$. Thus $m_i(\eta)$ is the multiplicity of $e^\eta$ in $1-e^{-\beta}$, so 

\[ m_i(\eta) = \begin{cases}
    1, & \eta = 0\\
    -1, & \eta = -\beta\\
    0, & \text{otherwise.}
  \end{cases}\]
At the point $p$, the weights of the tangent space are $\beta$ and $\gamma$, so the denominator is again $(1-e^{-\beta})(1-e^{-\gamma})$. Both weights are positive so $s_i=0$ and $\overline{\beta_i}=0$. This time there are no weights on tangent spaces in that orbit that are not weights of the tangent space at that point, so $C_i$ is empty and $m_i(\eta)$ is non-zero (and equal to one) only when $\eta = 0$. 

So our theorem tells us that 

$$\chi(V^G_\nu) = \frac{\chi(V^K_{\nu})-\left(\chi(V^K_{w\cdot\nu - \gamma})-\chi(V^K_{w\cdot\nu -\beta -\gamma})\right)}{(1-e^{-\beta})(1-e^{-\gamma})}.$$

\end{example}

\subsection{Relation to the GKRS formula}

In their paper \cite{gkrs}, Gross, Kostant, Ramond and Sternberg consider equal rank Lie algebras $\kk \subset \gg$ with $\gg$ semisimple and $\kk$ reductive, and give a formula for the characters of irreducible $\gg$-representations in terms of certain irreducible $\kk$-characters. Since representations of Lie groups are also representations of their Lie algebras, in the case where $G$ is semisimple and the quantisation of the $G$-action on $M$ is an irreducible $G$-representation, we expect our formula to agree with the GKRS formula. We now show that the two formulae do indeed coincide.

At the end of the proof of Theorem \ref{main}, we had expressed the contribution from one orbit of fixed points as 

\begin{equation}\label{toconvert}
\dfrac{\sum_{w \in W(K)}w\cdot \left(\frac{e^{\mu(p_i)}(-1)^{s_i}e^{\overline{\beta_i}}\prod_{\gamma \in C_i}(1-e^{-\gamma})}{\prod_{\phi \in \pk}(1-e^{-\phi})}\right)}{\prod_{\gamma \in \bb^+_i}(1-e^{-\gamma})}.
\end{equation}
We may write $\bb^+_i = \{\beta \in B_i \vert \beta = \beta^+ \} \sqcup \{-\beta \vert \beta \in B_i, \beta = -\beta^+ \}\sqcup C_i$. So multiplying numerator and denominator of \eqref{toconvert} by $\prod_{\gamma \in \bb^+_i}e^{\gamma/2}$ (which is $W(K)$-invariant), we get 

\begin{equation}\label{withhalves}
\dfrac{\sum_{w\in W(K)}w\cdot \left( \frac{e^{\mu(p_i)}(-1)^{s_i}e^{\beta_i}\prod_{\gamma \in C_i}(e^{\gamma/2}-e^{-\gamma/2})}{\prod_{\phi \in \pk}(1-e^{-\phi})} \right)}{\prod_{\gamma\in \bb^+_i}(e^{\gamma/2}-e^{-\gamma/2})}.
\end{equation}
Since $\gamma/2$ is not necessarily a weight of $T$, the numerator of \eqref{withhalves} is a character not of $K$ but of a covering of $K$; it is however a character of the Lie algebra $\kk$. Thus if we work at the Lie algebra level we have the alternative expression 
\begin{equation} \label{liealgversion}
\chi(\qm) =\sum_{F \in \pi_0(M^S)} \sum_{p_i \in F \cap P^+} \dfrac{\sum_{\eta \in \Lambda}(-1)^{s_i}\tilde{m_i}(\eta)\chi(V^\kk_{\mu(p_i)+\beta_i+\eta})}{\prod_{\gamma \in \bb^+_i}(e^{\gamma/2}-e^{-\gamma/2})}, 
\end{equation}
where $\tilde{m_i}(\lambda)$ is the multiplicity of $e^\lambda$ in $\prod_{
\gamma \in C_i}(e^{\gamma/2}-e^{-\gamma/2})$. 

Let $K \subset G$ be compact connected Lie groups of equal rank, and let $G$ act on $G/T$ (by left multiplication). Choose positive roots for $G$, let $\rho_G$ be half their sum, and let $\rho_K$ be half the sum of the positive roots of $K$. Recall that the choice of positive roots determines a complex structure on $G/T$; let $J$ be the associated almost complex structure. Choose $\xi$ such that the polarised roots are the positive roots. Let $\lambda$ be a dominant integral weight, and let $\ll = G \times_T \cc_{(\lambda)} \to G/T$ be the prequantisation line bundle, where $T$ acts on $\cc_{(\lambda)}$ with weight $\lambda$ (as in \cite{bott}). In this case the quantisation is the irreducible representation of $G$ of highest weight $\lambda$. The images of the fixed points of the $T$ action are the $w\cdot \lambda$, for $w \in W(G)$. Write $p_w$ for the fixed point whose image $\mu(p_w)$ is $w\cdot \lambda$. Let $\mathcal{W}_G$, $\mathcal{W}_K$ denote the positive Weyl chambers of $G$, $K$ respectively (so $\mathcal{W}_G \subset \mathcal{W}_K$). There is one fixed manifold of the $S$ action containing each of the fixed points whose images lie in the positive Weyl chamber $\mathcal{W}_K$, and each of those fixed manifolds contains a single $W(K)$-orbit of fixed points. As in Gross, Kostant, Ramond and Sternberg's paper \cite{gkrs}, let $C$ be the subset of $W(G)$ mapping $\mathcal{W}_G$ into $\mathcal{W}_K$; the fixed manifolds are indexed by $C$. The weights of the torus action on $T_{p_w}M$ are $\{w\cdot \phi \vert \phi \in \pg \}$. The orbit $K\cdot \lambda$ is a generic coadjoint orbit (of $K$) and so the set of weights of the $T$ action on the tangent space to the orbit is $\{w\cdot \phi \vert \phi \in \pg\}\cap \Phi(K)$, while the remaining weights make up the $\beta_w$. For every fixed manifold $F$, $\bb^+_i$ is exactly the set $\pgk$ of positive roots of $G$ that are not roots of $K$. For each weight $\beta\in B_w$ of the torus action on the tangent space to a fixed point, either $\beta$ or $-\beta$ appears as a weight of the torus action on the tangent space at each other fixed point in the $W(K)$-orbit; thus $C_i$ is empty, and so $\tilde{m}_i(\eta)$ is non-zero only when $\eta = 0$ in which case $\tilde{m}_i(\eta) = 1$. By definition $s_c$ is the number of positive roots changed into negative roots under $c$; that is, $(-1)^{s_c} = \epsilon(c)$. The weights at $T_{p_c}G/T$ are $\{c\cdot\phi \vert \phi \in \Phi^+(G)\}$. Note that if $c\cdot\phi \in \Phi(K)$ then $c\cdot\phi \in \Phi^+(K)$ since $C$ maps $\mathcal{W}_G$ into $\mathcal{W}_K$. So $\beta_c = \frac{\sum_{\phi \in \Phi^+(G)}c\cdot\phi - \sum_{\alpha \in \Phi^+(K)}\alpha}{2} = c\cdot\rho_G - \rho_K$. Substituting this all into equation \eqref{liealgversion}, we get

$$\chi(V) = \frac{\sum_{c \in C}\epsilon(c)\chi(V_{c(\lambda + \rho_G)-\rho_K})}{\prod_{\phi \in \pgk}(e^{\phi/2}-e^{-\phi/2})},$$ which is exactly the GKRS formula (equation 5 in \cite{gkrs}).

An alternative approach to obtaining a GKRS-like formula that applies to Lie groups is discussed in Landwebber and Sjamaar's paper \cite{ls}.

\section{Multiplicity Formula}

In this section, we will give a formula for the multiplicity in $\qm$ of $V^K_\lambda$,the irreducible representation of $K$ of highest weight $\lambda$. We will compare our formula with the Guillemin-Prato formula from \cite{gp}. 

\subsection{Derivation of the Multiplicity Formula}

Our starting point is the equation 

\begin{equation}
\label{currentsuggestive}
\chi \circ \exp = \sum_{F \in \pi_0(M^S)} \sum_{p_i \in F \cap P^+} \sum_{w \in U_i}\dfrac{\frac{e^{w(\mu(p_i))}}{\prod_j (1-e^{-w\cdot\alpha_{ij}})}}{\prod_{\beta \in B_i} (1-e^{-w\cdot\beta})}.
\end{equation}
from the previous section. Let us start by only considering fixed points belonging to one Weyl orbit. The contribution to formula \eqref{currentsuggestive} from such a set of points is 

\begin{equation}
\label{oneorbit}
\sum_{w \in U_i} \frac{\frac{e^{w\cdot\mu(p_i)}}{\prod_j (1-e^{-w\cdot\alpha_{ij}})}}{\prod_k(1-e^{-w\cdot\beta_{ik}})}.
\end{equation}

We polarise the $\beta_{ij}$ as above. If $\beta_{ij}=\beta_{ij}^+$, we have $(1-e^{-\beta_{ik}})^{-1}=(1-e^{-\beta_{ik}^+})^{-1}$; if $\beta_{ij}=-\beta_{ij}^+$ then rewrite $(1-e^{-\beta_{ik}})^{-1}$ as $(-e^{-\beta^+_{ik})}(1-e^{-\beta^+_{ik}})^{-1}$. By definition the number of $\beta_{ij}$ with $\beta_{ij}=-\beta_{ij}^+$ is $s_i$; thus $(\prod_k(1-e^{-\beta_{ik}}))^{-1}$ becomes $((-1)^{s_i}e^{-(\beta^+_i - \beta_i)})(\prod_k (1-e^{-\beta^+_{ik}}))^{-1}$.
As discussed earlier, we have $s_{w(i)}=s_i$, and $w\cdot(\beta^+_i - \beta_i) = \beta^+_{w(i)} - \beta_{w(i)}$. Thus we can rewrite \eqref{oneorbit} as 

\begin{equation}
\sum_{w \in U_i} \frac{\frac{(-1)^{s_i}w\cdot(e^{\mu(p_i)-\beta^+_i + \beta_i})}{\prod_j (1-e^{-w\cdot\alpha_{ij}})}}{\prod_k(1-e^{-w\cdot\beta^+_{ik}})}
\end{equation}
Expanding $(1-x)^{-1}$ as $1+x+x^2+\cdots$, this gives us

\begin{equation}
\sum_{w\in U_i}\frac{(-1)^{s_i}w\cdot(e^{\mu(p_i)-\beta^+_i + \beta_i})}{\prod_j (1-e^{-w\cdot\alpha_{ij}})}\prod_k(1+e^{-w\cdot \beta^+_{ik}}+e^{-2w\cdot \beta^+_{ik}}+e^{-3w\cdot \beta^+_{ik}}+\cdots).
\end{equation}
For $\zeta \in \tt^*$, write $P_i(\zeta)$ for the number of ways to write $\zeta$ as a sum $\sum_jc_j\beta^+_{ij}$, where the $c_j$ are non-negative integers. So the contribution of this $W(K)$-orbit of fixed points is 

\begin{equation} \label{orbitcont}
(-1)^{s_i}\sum_\zeta P_i(\zeta)\sum_{w \in U_i}\frac{w\cdot(e^{\mu(p_i)-\beta^+_i + \bi + \zeta})}{\prod_j (1-e^{-w\cdot\alpha_{ij}})}.
\end{equation}
Recall that an irreducible character of $K$ of highest weight $\lambda$ takes the form $$\sum_{w \in W(K)}\frac{e^{w\cdot \lambda}}{\displaystyle \prod_{\alpha \in \pk}(1-e^{-w\cdot\alpha})}.$$ 
Note that if $\mu(p_i)$ is regular and $w^{-1}\mu(p_i)$ is in the positive Weyl chamber $W_K$, then the $\alpha_{ij}$ are exactly the $w\cdot \alpha$, for $\alpha \in \pk$. If $\mu(p_i)$ is not regular and lies on a hyperplane orthogonal to the root $\alpha$ of $K$, then neither $\alpha$ nor $-\alpha$ will be a weight on $T_{p_i}(K\cdot p_i)$. As before let $A_i$ be the set $\{\alpha \in \pk \vert \langle \alpha, \mu(p_i) \rangle = 0\}$; notice that this is exactly the set of positive roots $\alpha$ of $K$ such that neither $\alpha$ nor $-\alpha$ is a weight of the torus action on $T_{p_i}(K\cdot p_i)$. Hence $\pk = \{\alpha_{ij}\} \sqcup A_i$. By lemma \ref{wactsonweights} we know that $A_{w\cdot i}=\{w\cdot \alpha \vert \alpha \in A_i\}$ for $w \in W(K)$. Let $\mathcal{A}_i < W(K)$ be the subgroup generated by the reflections $w_\alpha$ for $\alpha \in A_i$. Note that $\prod_{\alpha \in A_i}((1-e^{-\alpha})^{-1}+(1-e^\alpha)^{-1})$ is $\mathcal{A}_i$-invariant, so the action of $U_i$ on this product is well defined. 

Using Lemma \ref{trivrepn} we can rewrite \eqref{orbitcont} as 
\begin{equation} \label{wv}
(-1)^{s_i}\sum_\zeta P_i(\zeta)  \sum_{w \in U_i}\left(\frac{w\cdot(e^{\mu(p_i)-\beta^+_i + \bi - \zeta})}{\prod_k (1-e^{-w\cdot\alpha_{ik}})} \sum_{v\in \mathcal{A}_i}\frac{1}{\prod_{\alpha \in A_i}(1-e^{-v\cdot \alpha})} \right).
\end{equation}
But since $\pk = \{\alpha_{ik}\}\sqcup A_i$, this can be simplified to  
\begin{equation} \label{wholewk}
(-1)^{s_i}\sum_\nu P_i(\nu) \sum_{w \in W(K)}\frac{w\cdot(e^{\mu(p_i)-\beta^+_i + \bi - \nu})}{\prod_{\alpha \in \pk} (1-e^{-w\cdot\alpha})}.
\end{equation}
\begin{remark}
In the previous formula \eqref{wv}, the sum over $U_i$ corresponded to a sum over the fixed points in one $W(K)$-orbit. In the formula \eqref{wholewk}, we instead sum over the set of pairs $(p,w)$ where $p = w\cdot p_i$ for $w\in W(K)$.  Geometrically, what we have done is replace each point $p$ with several copies of itself, one for each Weyl chamber on whose boundary $\mu(p)$ lies.
\end{remark} Let $Z_i$ denote the set of pairs $(p,w)$ where $p = w\cdot p_i$ for $w\in W(K)$. Note that $\vert Z_i \vert = \vert W(K) \vert$.
 We let $Z$ be the union over all $p_i \in P^+$ of the $Z_i$; that is, let $Z$ be the set of pairs $(p,w)$ where $p$ is a fixed point of the $T$ action on $M$ and $w\in W(K)$ is such that $w^{-1}(\mu(p))$ is in the closed positive Weyl chamber $\overline{\mathcal{W}_K}$. (So each regular point appears once; the non-regular points appear once for each Weyl chamber in whose boundaries they lie.) 

Equation \eqref{wholewk} expresses the contribution of this orbit as a sum of irreducible $K$-characters; we would now like to know the multiplicity with which the $K$-character of highest weight $\lambda$ occurs, for each $\lambda \in \tt^*$. 
Let $\rho_K$ denote half the sum of the positive roots of $K$. We can rewrite \eqref{wholewk} as 
\begin{equation} \label{qi}
(-1)^{s_i}\sum_\zeta P_i(\zeta)\sum_{(p_i,w)\in Z_i}\epsilon(w) \frac{e^{\mu(p_i)-\beta^+_i+\beta_i-\zeta-\rho_K+w\cdot \rho_K}}{\prod_{\alpha \in \pk}(1-e^{-\alpha})}
\end{equation}
The contribution of one $W(K)$-orbit to the multiplicity of $\lambda$ is given by the multiplicity in \eqref{qi} of terms that look like $\frac{e^\lambda}{\prod_{\alpha \in \pk}(1-e^{-\alpha})}$. So write $\lambda = \mu(p_i)-\beta^+_i+\beta_i-\zeta -\rho_K+w\cdot\rho_K$. Then we can rewrite the contribution from one $W(K)$ orbit as
\begin{equation}
(-1)^{s_i}\sum_\lambda \sum_{(p_i,w)\in Z_i} \epsilon(w) P_i(-\lambda + \mu(p_i)-\beta^+_i +\beta_i-\rho_K +w\cdot\rho_K)\frac{e^\lambda}{\prod_{\alpha \in \pk}(1-e^{-\alpha})}
\end{equation}
and hence, summing over all Weyl orbits, we get the following formula:
\begin{thm} Let $K \subset G$ be compact Lie groups of equal rank with a common maximal torus $T$. Let $(M, \omega, L, \nabla; J)$ be a compact Hamiltonian $G$-space, and let $\qm$ be its quantisation. Let $\lambda$ be a dominant weight for $K$. Then the multiplicity of the irreducible $K$-representation of highest weight $\lambda$ in the $G$-representation $\qm$ is given by $$\#(\lambda, \qm) = \sum_{(p_i,w)\in Z} (-1)^{s_i}\epsilon(w)P_i(\mu(p_i)+w\cdot\rho_K +\bi -\lambda - \rho_K - \beta^+_i).$$ \end{thm}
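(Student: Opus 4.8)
The plan is to start from \eqref{currentsuggestive}, isolate the contribution of a single $W(K)$-orbit of fixed points, and convert that contribution into a sum of irreducible $K$-characters whose coefficients can be read off. First I would polarise the transverse weights: rewriting each factor $(1-e^{-w\cdot\beta_{ij}})^{-1}$ in terms of the polarised weight $\beta_{ij}^+$ produces a global sign $(-1)^{s_i}$ and a shift by $e^{w\cdot(\bi-\bwi)}$, where the identities $s_{w\cdot i}=s_i$ and $w\cdot(\bwi-\bi)=\beta^+_{w\cdot i}-\beta_{w\cdot i}$ (valid because $\xi$ is $W(K)$-fixed) ensure these data are constant along the orbit. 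Since every $\beta^+_{ij}$ pairs non-negatively with $\xi$, each geometric series expands as $\prod_j(1-e^{-w\cdot\beta^+_{ij}})^{-1}=\sum_\zeta P_i(\zeta)e^{-w\cdot\zeta}$ in the appropriate completion of the character ring, where $P_i(\zeta)$ counts the expressions $\zeta=\sum_j c_j\beta^+_{ij}$ with $c_j\in\mathbb{Z}_{\geq 0}$; finiteness of $P_i$ is exactly what the $\xi$-polarisation buys us.

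Next I would apply Lemma \ref{trivrepn} to enlarge the coset sum over $U_i$ to a sum over all of $W(K)$, reaching the form of \eqref{wholewk}; this is precisely the step that absorbs the non-regular fixed points, where $\mu(p_i)$ lies on one or more walls and the stabiliser $\mathcal{A}_i$ is nontrivial. The orbit contribution then has the shape $(-1)^{s_i}\sum_\zeta P_i(\zeta)\,\Theta_i(\zeta)$ with $\Theta_i(\zeta)=\sum_{w\in W(K)}e^{w\cdot\sigma}/\prod_{\alpha\in\pk}(1-e^{-w\cdot\alpha})$ and $\sigma=\mu(p_i)-\bwi+\bi-\zeta$. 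The key computation is the standard Weyl alternation: factoring $\prod_{\alpha\in\pk}(1-e^{-w\cdot\alpha})=\epsilon(w)\,e^{-w\cdot\rho_K}\prod_{\alpha\in\pk}(e^{\alpha/2}-e^{-\alpha/2})$ shows $\Theta_i(\zeta)=\bigl(\sum_{w\in W(K)}\epsilon(w)e^{w\cdot(\sigma+\rho_K)}\bigr)/\prod_{\alpha\in\pk}(e^{\alpha/2}-e^{-\alpha/2})$, so that the multiplicity of $V^K_\lambda$ in $\Theta_i(\zeta)$ is $\epsilon(w)$ whenever $\sigma+\rho_K=w^{-1}\cdot(\lambda+\rho_K)$ for some (then unique) $w\in W(K)$, and is $0$ if $\sigma+\rho_K$ is singular.

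Combining these, the orbit of $p_i$ contributes $(-1)^{s_i}\sum_{w\in W(K)}\epsilon(w)P_i\bigl(\mu(p_i)-\bwi+\bi+\rho_K-w\cdot(\lambda+\rho_K)\bigr)$ to the multiplicity of $V^K_\lambda$, the argument of $P_i$ being forced by the condition above (after the harmless relabelling $w\mapsto w^{-1}$). The final step is organisational: I would reindex the double sum over orbit representatives $p_i\in P^+$ and over $w\in W(K)$ as a single sum over the set $Z$ of pairs $(p,w)$ with $w^{-1}\cdot\mu(p)\in\overline{\mathcal{W}_K}$. This uses the equivariance of the local data along a $W(K)$-orbit, namely $\mu(w\cdot p_i)=w\cdot\mu(p_i)$, $\beta_{w\cdot i}=w\cdot\bi$, $\beta^+_{w\cdot i}=w\cdot\bwi$, $s_{w\cdot i}=s_i$, and the partition relation $P_{w\cdot i}(\zeta)=P_i(w^{-1}\cdot\zeta)$, all of which follow from Lemma \ref{wactsonweights} together with the $W(K)$-invariance of $\xi$. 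Attaching to each pair $(p,w)\in Z$ the data at the point $p$ itself and substituting $w\mapsto w^{-1}$ rewrites each summand as $(-1)^{s_i}\epsilon(w)P_i\bigl(\mu(p_i)+w\cdot\rho_K+\bi-\lambda-\rho_K-\bwi\bigr)$, which is the claimed formula.

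I expect the main obstacle to be the bookkeeping around the non-regular fixed points in this reindexing. A point $p$ with $\mu(p)$ on a wall must be counted once for each Weyl chamber on whose closure $\mu(p)$ lies, and one has to check that $Z$ is defined so that passing from the stabiliser-coset description supplied by Lemma \ref{trivrepn} to the uniform sum over $Z$ introduces neither overcounting nor sign errors. Confirming that the $\rho_K$-shift and the two independent signs $(-1)^{s_i}$ and $\epsilon(w)$ survive the reorganisation intact is the delicate part; by comparison, the geometric-series expansion and the Weyl alternation identity are routine.
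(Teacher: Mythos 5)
Your proposal is correct and follows essentially the same route as the paper: polarise the transverse weights to produce the sign $(-1)^{s_i}$ and the shift $e^{\bi-\bwi}$, expand the geometric series into the partition function $P_i$, use Lemma \ref{trivrepn} to pass from $U_i$ to all of $W(K)$, and reindex over $Z$. The only (cosmetic) difference is that you extract the multiplicity of $V^K_\lambda$ via the Weyl alternating sum over $w(\sigma+\rho_K)$, whereas the paper reads off the coefficient of $e^\lambda/\prod_{\alpha\in\pk}(1-e^{-\alpha})$ after moving $w$ into the exponent --- the same computation in different notation.
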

\begin{remark} This is almost the Guillemin-Prato multiplicity formula from \cite{gp}, generalised to allow fixed points whose images under $\mu$ are not regular. The two formulae do not quite agree; the formula appearing in \cite{gp}, in our notation, is the following: $$\#(\lambda, \qm) = \sum_{(p_i,w)\in Z} (-1)^{s_i}\epsilon(w)P_i(\lambda +w\cdot\rho_K +\beta^+_i -\bi - \rho_K -\mu(p_i)).$$ \end{remark}

\subsection{Examples}
We will illustrate our formula using the same examples as in the previous section. 

\begin{example}[Generic coadjoint orbits] Suppose $M=G/T$, and denote by $\nu$ the moment map image of the fixed point $p$ that lands in the dominant Weyl chamber. So we can identify $G/T$ with the coadjoint orbit $G\cdot \nu$. Then the set of fixed points is $\{u\cdot p \vert u \in W(G)\}$, and their moment map images are the $u \cdot \nu$. Write $p_u = u\cdot p$ for $u\in W(G)$. So the set $Z$ is $\{(p_u, w)\vert u \in W(G), w \in W(K), w^{-1}(u\cdot \nu) \in \mathcal{W}_K\}$. The weights of the $T$ action on the spaces $T_pM/T_p(K\cdot p)$ are the roots of $G$ that are not roots of $K$. So $\{\beta_{uj}\}=\pgk$, and at the point $p_u$ the $\beta_{uj}$ are $\{u\cdot \phi \vert \phi \in \pgk, u\cdot \phi \notin \Phi(K)\}$. For each $u$, $\beta_u = u\cdot \rho_G - w \cdot \rho_K$, and $\beta_u^+=\rho_G - \rho_K$.  Since the $\beta_{uj}^+$ are the same at each fixed point, we will write $P(\lambda)$ for the number of ways to express $\lambda$ as a sum $\sum_{j}c_j\beta_j$, where the $\beta_j$ are the positive roots of $G$ that are not roots of $K$, and the $c_j$ are non-negative integers. Finally, $(-1)^{s_u}\epsilon(w)=(-1)^n$, where $n$ is the number of roots of $G$ that change sign under $u$, and so $(-1)^{s_u}\epsilon(w) = \epsilon(u)$. So in this case, our formula becomes 
\begin{equation} \label{coadj}
\#(\lambda, \qm) = \sum_{u \in W(G)}\epsilon(u)P(u\cdot \nu+u\cdot\rho_G-\lambda-\rho_G).
\end{equation}
\begin{remark}
Goodman and Wallach proved a branching formula which gives the multiplicity of the irreducible representation of $H$ of highest weight $\lambda$ in the irreducible representation of $G$ of highest weight $\nu$ (Theorem 8.2.1 in \cite{gw}). They do not require $G$ and $H$ to be of equal rank, only that the chosen maximal torus of $H$ is contained in that of $G$, but in the case where the two groups do have a common maximal torus their formula coincides with our \eqref{coadj} above.
\end{remark}
\end{example}

\begin{example}

Consider the same non-generic coadjoint orbit we discussed in the previous section. That is, take $G=SU(3)$ and $K=S(U(2)\times U(1))$. Let $\alpha, \beta$ and $\gamma$ be the positive roots of $G$ (with $\alpha + \gamma = \beta$), and take $\alpha$ to be the positive root of $K$. Let $\nu = \beta + \gamma$, and let $M$ be the coadjoint orbit $G\cdot \nu \cong G/K$. Then the torus action has three fixed points: $\nu$, $w_+\nu$ and $w_-\nu$ (where $w_+$ and $w_-$ are the rotations in $W(G)$ that map $\nu$ into the chambers $\mathcal{W}_K$ and $-\mathcal{W}_K$ respectively). 
\begin{center}
\begin{tikzpicture}
\draw [white, fill=gray, fill opacity =0.2](-4,0) --(4,0) --(4,3.46410) --(-4,3.46410);
\node [above] at (3,2) {$\mathcal{W}_K$};
\node [below] at (3,-2) {$-\mathcal{W}_K$};
\draw [gray, fill=gray, fill opacity =0.6](-1.5,2.598) --(3,0) --(-1.5,-2.598) --(-1.5,2.598);
\node [above] at (-1.5,2.598) {$w_+ \nu$};
\node [below] at (-1.5,-2.598) {$w_- \nu$};
\node [below] at (3,0) {$\nu$};
\draw [gray](-4,0) --(4,0);
\draw [gray, rotate=60](-4,0) --(4,0);
\draw [gray, rotate=120](-4,0) --(4,0);
\draw [->][black](0,0) --(0,1.732);
\node [below right] at (0,1.732) {$\alpha$};
\draw [->][black, rotate=-60](0,0) --(0,1.732);
\node [below] at (1.5,0.866025) {$\beta$};
\draw [->][black, rotate=-120](0,0) --(0,1.732);
\node [below] at (1.5,-0.866025) {$\gamma$};
\draw [fill] (3,0) circle [radius=0.1];
\draw [fill] (-1.5,2.598) circle [radius=0.1];
\draw [fill] (-1.5,-2.598) circle [radius=0.1];
\draw [fill] (0,1.732) circle [radius=0.1];
\draw [fill] (1.5,0.866025) circle [radius=0.1];
\draw [fill] (0,0) circle [radius=0.1];
\draw [fill] (1.5,-0.866025) circle [radius=0.1];
\draw [fill] (0,-1.732) circle [radius=0.1];
\draw [fill] (-1.5,0.866025) circle [radius=0.1];
\draw [fill] (-1.5,-0.866025) circle [radius=0.1];
\end{tikzpicture}
\end{center}
Let $u$ be the non trivial element in $W(K)$. Since $w_+\nu$ is in the positive chamber $W_K$, $w_-\nu$ is in the other chamber and $\nu$ is on the boundary wall, the set $Z$ consists of the four elements $(w_+\nu, e), (w_-\nu, u), (\nu, e),$ and $(\nu, u)$. The data at each of these points is as follows:
\begin{center}
\begin{tabular}{|c|c|c|c|c|c|}
\hline
$(p,w)\in Z$ & Weights of $T$ action on $T_pM/T_p(K\cdot p)$& $s_i$ & $\epsilon(w)$  & $\beta_i$ & $\beta_i^+$ \\
\hline
$(w_+\nu,e)$ & $-\gamma$ & 1 & 1 & $-\gamma/2$ & $\gamma/2$ \\
$(w_-\nu,u)$ & $-\beta$ & 1 & -1 & $-\beta/2$ & $\beta/2$ \\
$(\nu,e)$ & $\beta, \gamma$ & 0 & 1 & $(\beta + \gamma)/2$ & $(\beta +\gamma)/2$ \\
$(\nu,u)$ & $\beta, \gamma$ & 0 & -1 & $(\beta + \gamma)/2$ & $(\beta +\gamma)/2$ \\
\hline
\end{tabular}
\end{center}
In this case $\rho_K=\alpha/2$. Putting this into our formula, we get $$-P_{\gamma}(w_+\nu-\lambda-\gamma)+P_{\beta}(w_-\nu - \beta - \alpha-\lambda)+P_{\beta,\gamma}(\nu-\lambda)-P_{\beta,\gamma}(\nu - \alpha - \lambda),$$ where $P_A(\psi)$ denotes the number of ways to write $\psi$ as a sum $\sum_{a \in A}c_a a$ where the $c_a$ are in $\mathbb{N}$. Notice that in this example, each $A$ is a linearly independent set of roots, so $P_A$ will always be one or zero. For each point $(p_i,w)\in Z$, we will shade the region of $\tt^*$ containing those $\lambda$ where $P_i(\lambda + \beta^w_i - \beta_i + \rho_K - w\cdot\rho_K - \mu(p_i))=1$ with red lines of positive slope if $(-1)^{s_i}\epsilon(w)$ is $+1$ or blue lines of negative slope if $(-1)^{s_i}\epsilon(w)$ is $-1$. (Thus the regions where cancellations occur will appear hashed in red and blue.) The point $(w_+\nu,e)$ contributes a multiplicity of $-1$ for every $\lambda$ that can be written as $w_+\nu -\gamma - c\gamma$ for $c \in \mathbb{N}$, so we illustrate this with a blue dashed halfline starting at $w_+\nu - \gamma$ and travelling in the $-\gamma$ direction. The point $(\nu, e)$ contributes a multiplicity of $+1$ for every $\lambda$ that can be written as $\nu -b\beta -c\gamma$ for $b, c \in \mathbb{N}$, so this is illustrated with a red shaded cone with vertex $\nu$. The contributions from the other points are shaded in the same way.
\begin{figure}
\begin{center}
\begin{tikzpicture}
\draw [fill] (3/2,0) circle [radius=0.1];
\draw [black] (3/2,0) circle [radius=0.2];
\draw [fill] (-1.5/2,2.598/2) circle [radius=0.1];
\draw [black] (-1.5/2,2.598/2) circle [radius=0.2];
\draw [fill] (-1.5/2,-2.598/2) circle [radius=0.1];
\draw [fill] (0,1.732/2) circle [radius=0.1];
\draw [black] (0,1.732/2) circle [radius=0.2];
\draw [fill] (1.5/2,0.866025/2) circle [radius=0.1];
\draw [black] (1.5/2,0.866025/2) circle [radius=0.2];
\draw [fill] (0,0) circle [radius=0.1];
\draw [fill] (1.5/2,-0.866025/2) circle [radius=0.1];
\draw [fill] (0,-1.732/2) circle [radius=0.1];
\draw [fill] (-1.5/2,0.866025/2) circle [radius=0.1];
\draw [fill] (-1.5/2,-0.866025/2) circle [radius=0.1];
\draw [fill] (-3/2,0) circle [radius=0.1];
\draw [fill] (-3/2,-1.732/2) circle [radius=0.1];
\draw [fill] (-3/2,1.732/2) circle [radius=0.1];
\draw [fill] (-3/2,3.4641/2) circle [radius=0.1];
\draw [fill] (-3/2,-3.4641/2) circle [radius=0.1];
\draw [orange](-1.5/2,2.598/2) --(3/2,0) --(-1.5/2,-2.598/2) --(-1.5/2,2.598/2);
\node [above] at (-1.5/2,2.598/2) {$w_+ \nu$};
\node [below] at (-1.5/2,-2.598/2) {$w_- \nu$};
\node [below] at (3/2,0) {$\nu$};
\draw [red, pattern color=red, pattern = north east lines](-6/2,5.19615/2) --(3/2,0) --(-6/2,-5.19615/2) --(-6/2,5.19615/2);
\draw [blue, pattern color=blue, pattern = north west lines](-6/2,3.461/2) --(3/2,-1.73205/2) --(-6/2,-6.928203/2) --(-6/2,3.461/2);
\draw [blue, dashed, ultra thick](-3/2,3.46410/2) --(-6/2,5.19615/2);
\draw [red, dashed, ultra thick](-3/2,-3.46410/2) --(-6/2,-5.19615/2);
\draw [white, fill=white, fill opacity =0.6](-6/2,0) --(4/2,0) --(4/2,-3.46410/2) --(-6/2,-6.928203/2);
\draw [yellow, ultra thick](-7,0) --(4,0);

\end{tikzpicture}
\caption{The circled points are the only points in $\overline{\mathcal{W}_K}$ with a non-zero net multiplicity, showing that they are the highest weights of the four characters of $K$ that appear in $\qm$ with multiplicity one.}
\label{colour}
\end{center}
\end{figure}
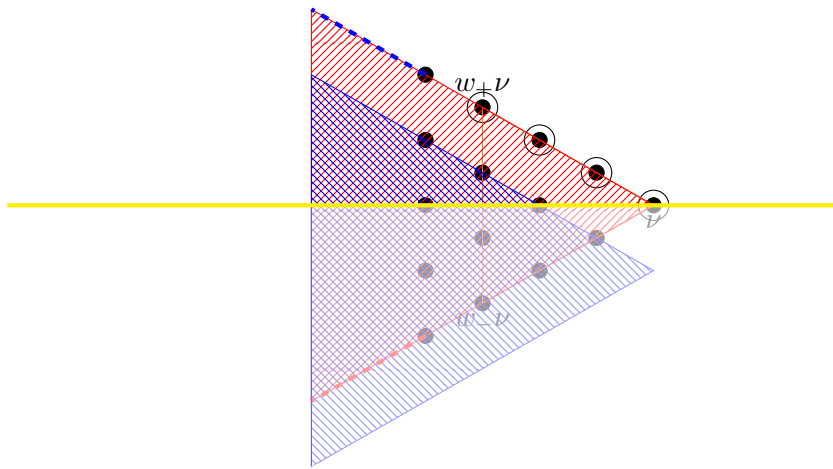

The portion of the picture we are interested in is the top half, corresponding to the closed positive Weyl chamber $\overline{\mathcal{W}_K}$, since our formula tells us, for an element $\lambda \in W_K$, the multiplicity of the irreducible $K$ representation of highest weight $\lambda$. We can see that most of this region is either unshaded or hashed in red and blue; in fact the only integral weights in $\mathcal{W}_K$ that lie in the positive region are the four lying along the top edge of the triangle. Thus we see that our $G$ character breaks up into the four $K$ characters which have the circled weights as their highest weights (see figure~\ref{colour}).        
\end{example}

\bibliographystyle{plain}

\bibliography{refs}

\end{document}